\newtheorem{theorem}{Theorem}[section]
\newtheorem{lemma}[theorem]{Lemma}
\newtheorem{proposition}[theorem]{Proposition}
\newtheorem{hypothesis}[theorem]{Hypothesis}
\numberwithin{equation}{section}
\theoremstyle{remark}
\newtheorem{remark}[theorem]{Remark}
\newtheorem{example}[theorem]{Example}
\theoremstyle{definition}
\newtheorem{definition}[theorem]{Definition}
\newcommand{\Ric}{\mathop{\mathrm{Ric}}\nolimits}
\newcommand{\Ad}{\mathop{\mathrm{Ad}}\nolimits}
\newcommand{\tr}{\mathop{\mathrm{tr}}\nolimits}
\author{Artem Pulemotov\thanks{School of Mathematics and Physics, The
University of Queensland, St Lucia,~QLD 4072, Australia}~\thanks{Department of Mathematics, Cornell University, Ithaca, NY 14853, USA}~\thanks{Artem Pulemotov's research is supported under the Australian Research Council's Discovery Projects funding scheme (DP180102185).} \\
\small{\texttt{a.pulemotov@uq.edu.au}}}
\title{Maxima of curvature functionals and the prescribed Ricci curvature problem on homogeneous spaces}
\begin{document}

\maketitle

\begin{abstract}
Consider a compact Lie group $G$ and a closed Lie subgroup $H<G$. Let $\mathcal M$ be the set of $G$-invariant Riemannian metrics on the homogeneous space $M=G/H$. By studying variational properties of the scalar curvature functional on $\mathcal M$, we obtain an existence theorem for solutions to the prescribed Ricci curvature problem on~$M$. To illustrate the applicability of this result, we explore cases where $M$ is a generalised Wallach space and a generalised flag manifold.
\\
\\
\noindent
\textbf{Keywords:} Prescribed Ricci curvature, homogeneous space, generalised Wallach space, generalised flag manifold
\end{abstract}


\section{Introduction}\label{sec_intro}

The prescribed Ricci curvature problem is an important area of research in geometric analysis with close ties to flows and relativity. The first detailed results in this area were obtained in the early 1980s by D.~DeTurck. We invite the reader to see~\cite{RPLAMP15,APadd,ED17b} and references therein for some recent advances and a historical overview.

The present paper discusses the prescribed Ricci curvature problem in the framework of homogeneous spaces. More precisely, consider a compact connected Lie group $G$ and a closed connected Lie subgroup $H<G$. Denote by $M$ the homogeneous space~$G/H$. Suppose the dimension of $M$ is at least~3 and $\mathcal M$ is the set of $G$-invariant Riemannian metrics on~$M$. Let $S$ be the scalar curvature functional on~$\mathcal M$. The prescribed Ricci curvature problem for $G$-invariant metrics on $M$ consists in finding $g\in\mathcal M$ that satisfy the equation
\begin{align}\label{eq_PRC}
\Ric g=cT,
\end{align}
for some $c>0$, where $T$ is a given $G$-invariant $(0,2)$-tensor field. The study of~\eqref{eq_PRC} in the framework of homogeneous spaces was initiated in~\cite{AP16} and continued in~\cite{MGAP17}; see also~\cite{RH84,EDMH01,TB16}. It is on the basis of~\cite{AP16} that the first results about the Ricci iteration in the non-K\"ahler setting were obtained in~\cite{APYRsubm}. These results provided a new approach to uniformisation on homogeneous spaces. Moreover, they led to the discovery of several dynamical properties of the Ricci curvature.

Assume the $(0,2)$-tensor field $T$ lies in~$\mathcal M$. As~\cite[Lemma~2.1]{AP16} demonstrates, a metric $g\in\mathcal M$ satisfies~\eqref{eq_PRC} for some $c\in\mathbb R$ if and only if it is (up to scaling) a critical point of the functional $S$ on
\begin{align*}
\mathcal M_T=\{h\in\mathcal M\,|\,\tr_hT=1\}.
\end{align*}
This parallels the well-known variational interpretation of the Eisntein equation.
Indeed, a metric $g\in\mathcal M$ satisfies
\begin{align*}
\Ric g=\lambda g
\end{align*}
for some $\lambda\in\mathbb R$ if and only it is (up to scaling) a critical point of $S$ on
\begin{align*}
\mathcal M_1=\{h\in\mathcal M\,|\,M~\mbox{has volume 1 with respect to}~h\};
\end{align*}
see, e.g.,~\cite[\S1]{MWWZ86}. However, the restrictions of $S$ to $\mathcal M_T$ and $\mathcal M_1$ have substantially different properties. We will elaborate on this after we provide an overview of our main results.

According to~\cite[Theorem~1.1]{AP16}, if $H$ is a maximal connected Lie subgroup of~$G$, the functional $S$ attains its greatest value on~$\mathcal M_T$ at some $g\in\mathcal M_T$. It is easy to show that this $g$ satisfies~\eqref{eq_PRC} with $c>0$. In the present paper, we focus on the situation where the maximality assumption on $H$ does not hold. As~\cite[Proposition~3.1]{AP16} shows, in this situation, $S$ may fail to have a critical point on~$\mathcal M_T$. The work~\cite{MGAP17} provides a sufficient condition for $S$ to attain its greatest value on~$\mathcal M_T$. However, the usability of this result
is restricted by two main factors. First, the class of homogeneous spaces on which it applies, while broad, is far from exhaustive. All the examples known to date have isotropy representations that split into \emph{pairwise inequivalent} irreducible summands.
Second, the arguments in~\cite{MGAP17} impose rather strong requirements on~$T$.

In Section~\ref{sec_gen_results} of the present paper, we obtain a new sufficient condition for $S$ to attain its greatest value on~$\mathcal M_T$. We state this result as Theorem~\ref{thm_gen}. While it is similar in spirit to the sufficient condition given in~\cite{MGAP17}, it applies on a substantially larger class of homogeneous spaces. For instance, it can be used to analyse~\eqref{eq_PRC} on $Sp(n+1)/Sp(n)$ for $n\in\mathbb N$; see Remark~\ref{rem_spheres}. Topologically, this space is a $(4n+3)$-dimensional sphere. Its isotropy representation splits into four irreducible summands, three of which are equivalent to each other.

The requirements imposed on $T$ by Theorem~\ref{thm_gen} are different from the ones imposed by the sufficient condition of~\cite{MGAP17}. They appear to be substantially lighter in most situations. We illustrate this by considering two examples;
see Remarks~\ref{rem_compar1} and~\ref{rem_compar2} below.

Section~\ref{sec_Wallach} discusses the prescribed Ricci curvature problem on generalised Wallach spaces with inequivalent isotropy summands. A complete classification of such spaces is given in~\cite{ZCYKKL16,YN16}. They possess a number of interesting properties, and their geometry has been studied by several authors; see, e.g.,~\cite{AANS15,NAYN16,ZCYKKL16,AAYW17}. There are several infinite families and 10 isolated examples, excluding products, constructed out of exceptional Lie groups. In Section~\ref{sec_Wallach}, we show that Theorem~\ref{thm_gen} yields a sufficient condition for the existence of $g\in\mathcal M$ satisfying~\eqref{eq_PRC} for some $c>0$ in the case where $M$ is a generalised Wallach space with inequivalent isotropy summands. This condition is exceedingly easy to verify.

Section~\ref{sec_flag} is devoted to the prescribed Ricci curvature problem on generalised flag manifolds with up to five irreducible summands in the isotropy representation. Such manifolds form an important class of homogeneous spaces with applications across a variety of fields. For detailed discussions of their geometric properties, see~\cite[Chapter~7]{AA03} and the survey~\cite{AA15}. According to the classifications given in~\cite{AAIC10,SAIC11,AAICYS13}, there are numerous infinite families and isolated examples. In Section~\ref{sec_flag}, we describe a method for verifying the assumptions of Theorem~\ref{thm_gen} on generalised flag manifolds with up to five irreducible summands in the isotropy representation. For clarity, we provide a detailed analysis of the cases where $M$ equals $G_2/U(2)$ (with $U(2)$ corresponding to the long root of $G_2$) and $F_4/SU(3)\times SU(2)\times U(1)$.

The results of the present paper have no analogues in the theory of homogeneous Einstein metrics. Indeed, our main focus in on situations in which $S$ attains its greatest value on $\mathcal M_T$. However, as demonstrated by~\cite[Theorem~(2.4)]{MWWZ86} and~\cite[Theorem~1.2]{CB04}, the restriction of $S$ to the set $\mathcal M_1$ is typically unbounded above.

\section{Preliminaries}\label{sec_prelim}

As in Section~\ref{sec_intro}, consider a compact connected Lie group $G$ with Lie algebra $\mathfrak g$ and a closed connected subgroup $H<G$ with Lie algebra $\mathfrak h\subset\mathfrak g$. Let the homogeneous space $M=G/H$ be of dimension at least~3.
Choose a scalar product $Q$ on $\mathfrak g$ induced by a bi-invariant Riemannian metric on $G$. In what follows, $\oplus$ stands for the $Q$-orthogonal sum. Clearly,
\begin{align*}
\mathfrak g=\mathfrak m\oplus\mathfrak h
\end{align*}
for some $\Ad(H)$-invariant space~$\mathfrak m$. The representation $\Ad(H)|_{\mathfrak m}$ is equivalent to the isotropy representation of $G/H$. We standardly identify $\mathfrak m$ with the tangent space $T_HM$.

\subsection{The structure constants}\label{subsec_str_const}

Choose a $Q$-orthogonal $\Ad(H)$-invariant decomposition
\begin{align}\label{m_decomp}
\mathfrak m=\mathfrak m_1\oplus\cdots\oplus\mathfrak m_s
\end{align}
such that $\mathfrak m_i\ne\{0\}$ and $\Ad(H)|_{\mathfrak m_i}$ is irreducible for each $i=1,\ldots,s$. The space $\mathfrak m$ may admit more than one decomposition of this form. However, by Schur's lemma, the summands $\mathfrak m_1,\ldots,\mathfrak m_s$ are determined uniquely up to order if $\Ad(H)|_{\mathfrak m_i}$ and $\Ad(H)|_{\mathfrak m_j}$ are inequivalent whenever $i\ne j$. Let $d_i$ be the dimension of~$\mathfrak m_i$. It is easy to show that the number $s$ and the multiset $\{d_1,\ldots,d_s\}$ are independent of the chosen decomposition~\eqref{m_decomp}.

Denote by $B$ the Killing form of $\mathfrak g$. For every $i=1,\ldots,s$, because $\Ad(H)|_{\mathfrak m_i}$ is irreducible, there exists $b_i\ge0$ such that
\begin{align}\label{b_def}
B|_{\mathfrak m_i} = -b_iQ|_{\mathfrak m_i}.
\end{align}
The numbers $b_1,\ldots,b_s$ will help us write down a convenient formula for the scalar curvature of a metric on~$M$.

Given $\Ad(H)$-invariant subspaces $\mathfrak u_1$, $\mathfrak u_2$ and $\mathfrak u_3$ of $\mathfrak m$, define a tensor $\Delta(\mathfrak u_1,\mathfrak u_2,\mathfrak u_3)\in\mathfrak u_1\otimes\mathfrak u_2^*\otimes\mathfrak u_3^*$ by setting
\begin{align*}
\Delta(\mathfrak u_1,\mathfrak u_2,\mathfrak u_3)(X,Y)=\pi_{\mathfrak u_1}[X,Y],\qquad X\in\mathfrak u_2,~Y\in\mathfrak u_3,
\end{align*}
where $\pi_{\mathfrak u_1}$ stands for the $Q$-orthogonal projection onto~$\mathfrak u_1$. Let $\langle \mathfrak u_1\mathfrak u_2\mathfrak u_3\rangle$ be the squared norm of $\Delta(\mathfrak u_1,\mathfrak u_2,\mathfrak u_3)$ with respect to the scalar product on $\mathfrak u_1\otimes\mathfrak u_2^*\otimes\mathfrak u_3^*$ induced by $Q|_{\mathfrak u_1}$, $Q|_{\mathfrak u_2}$ and $Q|_{\mathfrak u_3}$. The fact that $Q$ comes from a bi-invariant metric on $G$ implies
\begin{align}\label{permut}
\langle\mathfrak u_1\mathfrak u_2\mathfrak u_3\rangle=\langle\mathfrak u_{\rho(1)}\mathfrak u_{\rho(2)}\mathfrak u_{\rho(3)}\rangle
\end{align}
for any permutation $\rho$ of the set $\{1,2,3\}$. Given $i,j,k\in\{1,\ldots,s\}$, denote
\begin{align*}
\langle\mathfrak m_i\mathfrak m_j\mathfrak m_k\rangle=[ijk].
\end{align*}
The numbers $([ijk])_{i,j,k=1}^s$ are called the \emph{structure constants} of the homogeneous space $M$; cf.~\cite[\S1]{MWWZ86}.
If $\mathcal J_{\mathfrak u_l}$ is a non-empty subset of $\{1,\ldots,s\}$ and
$\mathfrak u_l$ equals $\bigoplus_{i\in \mathcal J_{\mathfrak u_l}}\mathfrak m_i$
for each $l=1,2,3$, then
\begin{align}\label{<ijk>[ijk]}
\langle\mathfrak u_1\mathfrak u_2\mathfrak u_3\rangle=\sum_{i\in\mathcal J_{\mathfrak u_1}}\sum_{j\in\mathcal J_{\mathfrak u_2}}\sum_{k\in\mathcal J_{\mathfrak u_3}}[ijk].
\end{align}

\subsection{The scalar curvature functional and its extension}\label{subsec_sc_curv}

As in Section~\ref{sec_intro}, let $\mathcal M$ be the space of all $G$-invariant Riemannian metrics on~$M$. This space has a natural smooth manifold structure; see, e.g.,~\cite[pages~6318--6319]{YNERVS07} and~\cite[Subsection~4.1]{CB04}. The scalar curvature $S(g)$ of a metric $g\in\mathcal M$ is constant on~$M$. Therefore,
we may interpret $S(g)$ as the result of applying a functional $S:\mathcal M \to \mathbb R$ to~$g$. In what follows, we implicitly identify $g\in\mathcal M$ with the $\Ad(H)$-invariant scalar product induced by $g$ on $\mathfrak m$ via the identification of $T_HM$ and~$\mathfrak m$. To 
proceed, we need some notation. Namely, suppose $\mathfrak u$ and $\mathfrak v$ are subspaces of $\mathfrak m$ such that $\mathfrak v\subset\mathfrak u$. Let $\mathfrak u\ominus\mathfrak v$ be the $Q$-orthogonal complement of $\mathfrak v$ in~$\mathfrak u$. Consider bilinear forms $X$ and $Y$ on $\mathfrak u$ and $\mathfrak v$, respectively. Denote by $X|_\mathfrak v$ the restriction of $X$ to~$\mathfrak v$. If $X|_{\mathfrak v}$ is positive-definite, $\tr_XY$ stands for the trace of $Y$ with respect to~$X|_{\mathfrak v}$.

The scalar curvature of $g\in\mathcal M$ is given by the equality
\begin{align}\label{sc_curv_basf}
S(g)=-\frac12\tr_gB|_{\mathfrak m}-\frac14
|\Delta(\mathfrak m,\mathfrak m,\mathfrak m)|_g^2,
\end{align}
where $|\cdot|_g$ is the norm induced by~$g$ on $\mathfrak m\otimes\mathfrak m^*\otimes\mathfrak m^*$. If the decomposition~\eqref{m_decomp} is such that
\begin{align}\label{g_form}
g=\sum_{i=1}^sx_i\pi_{\mathfrak m_i}^*Q
\end{align}
for some $x_1,\ldots,x_s>0$, then
\begin{align}\label{sc_curv_formula}
S(g)&=\frac12\sum_{i=1}^s\frac{d_ib_i}{x_i}-\frac14\sum_{i,j,k=1}^s[ijk]\frac{x_k}{x_ix_j}.
\end{align}
For the derivation of formulas~\eqref{sc_curv_basf} and~\eqref{sc_curv_formula}, see, e.g.,~\cite[Chapter~7]{AB87} and~\cite[Lemma~3.2]{APYRsubm}. Given $g\in\mathcal M$, it is always possible to choose~\eqref{m_decomp} in such a way that~\eqref{g_form} holds. For the proof of this fact, see~\cite[page~180]{MWWZ86}.

Suppose $\mathfrak k$ is a Lie subalgebra of $\mathfrak g$ containing $\mathfrak h$ as a proper subset. It will be convenient for us to denote
\begin{align}\label{def_funny_subsp}
\mathfrak n=\mathfrak k\ominus\mathfrak h,\qquad \mathfrak l=\mathfrak g\ominus\mathfrak k.
\end{align}
Let $\mathcal M(\mathfrak k)$ be the space of $\Ad(H)$-invariant scalar products on $\mathfrak n$. In what follows, we assume $\mathcal M(\mathfrak k)$ is equipped with the topology inherited from the second tensor power of~$\mathfrak n^*$. Our further arguments require introducing an extension of the functional $S$ to~$\mathcal M(\mathfrak k)$. More precisely, define
\begin{align}\label{hat_S_def}
\hat S(h)=-\frac12\tr_hB|_{\mathfrak n}
-\frac12|\Delta(\mathfrak l,\mathfrak n,\mathfrak l)|_{\rm{mix}}^2
-\frac14
|\Delta(\mathfrak n,\mathfrak n,\mathfrak n)|_h^2,\qquad h\in\mathcal M(\mathfrak k),
\end{align}
where $|\cdot|_{\rm{mix}}$ is the norm induced by $Q|_{\mathfrak l}$ and $h$ on $\mathfrak l\otimes\mathfrak n^*\otimes\mathfrak l^*$ and $|\cdot|_h$ is the norm induced by~$h$ on $\mathfrak n\otimes\mathfrak n^*\otimes\mathfrak n^*$. If $\mathfrak k$ coincides with $\mathfrak g$, we identify $\mathcal M(\mathfrak k)$ with $\mathcal M$. In this case, the second term on the right-hand side of~\eqref{hat_S_def} vanishes, and $\hat S(h)$ equals~$S(h)$. If the decomposition~\eqref{m_decomp} is such that
\begin{align}\label{n_h_diag}
\mathfrak k=\Big(\bigoplus_{i\in J_{\mathfrak k}}\mathfrak m_i\Big)\oplus\mathfrak h,\qquad h=\sum_{i\in J_{\mathfrak k}}y_i\pi_{\mathfrak m_i}^*Q,\qquad y_i>0,
\end{align}
for some $J_{\mathfrak k}\subset\{1,\ldots,s\}$, then
\begin{align}\label{hat_S_formula}
\hat S(h)&=\frac12\sum_{i\in J_{\mathfrak k}}\frac{d_ib_i}{y_i}-\frac12\sum_{i\in J_{\mathfrak k}}\sum_{j,k\in J_{\mathfrak k}^c}\frac{[ijk]}{y_i}
-\frac14\sum_{i,j,k\in J_{\mathfrak k}}[ijk]\frac{y_k}{y_iy_j},
\end{align}
where $J_{\mathfrak k}^c$ is the complement of $J_{\mathfrak k}$ in $\{1,\ldots,s\}$; see~\cite[Lemma~2.19]{MGAP17}.

\subsection{Restrictions to hypersurfaces}

Throughout the rest of this paper, we fix $T\in\mathcal M$. As in Section~\ref{sec_intro}, define 
$\mathcal M_T$ as the set of those $g\in\mathcal M$ that satisfy the equality $\tr_gT=1$. Assume $\mathcal M_T$ has the smooth structure inherited from~$\mathcal M$. The following result is a special case of~\cite[Lemma~2.1]{AP16}. It provides a variational interpretation of the prescribed Ricci curvature equation~\eqref{eq_PRC} on homogeneous spaces. We will use it in the proof of Theorem~\ref{thm_gen} below.

\begin{proposition}\label{prop_var}
The Ricci curvature of a metric $g\in\mathcal M_T$ equals $cT$ for some $c\in\mathbb R$ if and only if $g$ is a critical point of the restriction of the scalar curvature functional $S$ to $\mathcal M_T$.
\end{proposition}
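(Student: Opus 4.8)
The plan is to treat this as a constrained critical point problem and use the method of Lagrange multipliers on the manifold $\mathcal M$, exploiting the fact that both the scalar curvature functional $S$ and the constraint function $g\mapsto\tr_gT$ have computable first variations. First I would recall the first variation of the total scalar curvature / scalar curvature functional on a homogeneous space. Since all metrics here are $G$-invariant and the relevant quantities are constant on $M$, the functional $S$ is genuinely a function on the finite-dimensional manifold $\mathcal M$, and its differential at $g$ applied to a variation $g'\in T_g\mathcal M$ (an $\Ad(H)$-invariant symmetric bilinear form on $\mathfrak m$) should be expressible via the Ricci endomorphism. The classical computation gives, up to an overall constant and a divergence term that vanishes in the homogeneous setting, something of the form $dS_g(g')=-\langle\Ric g-\tfrac12 S(g)g,g'\rangle_g$ or, after accounting for the fact that we are differentiating the pointwise scalar curvature rather than the volume-weighted integral, a cleaner expression $dS_g(g')=-\langle\Ric g,\tilde g'\rangle$ where the precise form is recorded in the references cited for~\eqref{sc_curv_basf}. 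The main point I would extract is that $dS_g(g')$ is a nondegenerate pairing of $\Ric g$ against $g'$.

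Next I would describe the constraint. The set $\mathcal M_T=\{h\in\mathcal M\mid\tr_hT=1\}$ is a level set of the smooth function $F(h)=\tr_hT$, so its tangent space at $g$ is the kernel of $dF_g$. A direct computation of the first variation of $h\mapsto\tr_hT$ shows that $dF_g(g')$ is, up to sign, the pairing of $T$ against $g'$ expressed in the metric $g$; concretely, writing everything with respect to $g$, one gets $dF_g(g')=-\langle T,g'\rangle_g$ after raising indices, so that $T$ (viewed through $g$) is the gradient of the constraint. The regularity I need is that $dF_g$ is nonzero, i.e.\ $g\in\mathcal M_T$ is a regular point of $F$; this holds because $T$ is positive-definite, so $T$ is a nonzero vector and $\mathcal M_T$ is a genuine smooth hypersurface, justifying the smooth structure assumed in the statement.

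With these two variations in hand, the Lagrange multiplier principle gives the equivalence directly. A metric $g\in\mathcal M_T$ is a critical point of $S|_{\mathcal M_T}$ precisely when $dS_g$ annihilates $\ker dF_g=T_g\mathcal M_T$, which by standard linear algebra is equivalent to the existence of $\mu\in\mathbb R$ with $dS_g=\mu\,dF_g$ as elements of $T_g^*\mathcal M$. Substituting the two variation formulas, this reads $\langle\Ric g,g'\rangle=\mu\langle T,g'\rangle$ for all admissible $g'$, and since the pairing $\langle\cdot,\cdot\rangle$ is nondegenerate on the space of $\Ad(H)$-invariant symmetric forms, it is equivalent to the tensor identity $\Ric g=\mu\,T$ (with $c=\mu$). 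I would then note the one subtlety that makes this work cleanly on a homogeneous space: because $S(g)$ is the constant pointwise scalar curvature and not a volume integral, the divergence and trace terms that normally complicate the first variation of scalar curvature drop out or combine into the stated form, so no second-order or boundary analysis is needed.

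The step I expect to require the most care is pinning down the exact first-variation formula for $dS_g$ and verifying that the contracted second Bianchi identity terms behave as claimed in the invariant setting — that is, confirming that differentiating the \emph{pointwise} scalar curvature (rather than the Einstein--Hilbert integral) yields a variation proportional to $\langle\Ric g,\cdot\rangle$ with no leftover $S(g)g$ term obstructing the identification $\Ric g=cT$. Everything else is a routine application of Lagrange multipliers once both differentials are computed, and since this is stated as a special case of~\cite[Lemma~2.1]{AP16}, I would lean on that reference for the detailed variation computation and present the argument at the level of identifying gradients and invoking nondegeneracy.
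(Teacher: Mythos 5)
Your proposal is correct and takes essentially the same route as the paper: the paper gives no argument of its own, citing \cite[Lemma~2.1]{AP16}, and the proof of that lemma is precisely your Lagrange-multiplier computation, namely $dS_g(g')=-\langle\Ric g,g'\rangle_g$ for $G$-invariant variations (the divergence/trace terms vanish because they are invariant functions with zero integral), $dF_g(g')=-\langle T,g'\rangle_g$ for the constraint $F(h)=\tr_hT$, and nondegeneracy of the pairing on $\Ad(H)$-invariant symmetric forms. Your hedging over the extra $\tfrac12S(g)g$ term resolves exactly as you suspect: that term comes from varying the volume form in the Einstein--Hilbert functional and is absent when differentiating the pointwise scalar curvature, so the identification $\Ric g=cT$ goes through as stated.
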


As in Subsection~\ref{subsec_sc_curv}, consider a Lie subalgebra $\mathfrak k$ of $\mathfrak g$ such that $\mathfrak h\subset\mathfrak k$ and $\mathfrak h\ne\mathfrak k$. Let $\mathfrak n$ and $\mathfrak l$ be given by~\eqref{def_funny_subsp}. Denote
\begin{align*}
\mathcal M_T(\mathfrak k)=\{h\in\mathcal M(\mathfrak k)\,|\,\tr_hT|_{\mathfrak n}=1\}.
\end{align*}
We assume $\mathcal M_T(\mathfrak k)$ is equipped with the topology it inherits from~$\mathcal M(\mathfrak k)$. Our next result introduces a important parameter associated with $\mathfrak k$.

\begin{proposition}\label{prop_sigma_finite}
The quantity $\sigma(\mathfrak k,T)$ defined by the formula
\begin{align*}
\sigma(\mathfrak k,T)=\sup\{\hat S(h)\,|\,h\in\mathcal M_T(\mathfrak k)\}
\end{align*}
satisfies
\begin{align*}
0\le\sigma(\mathfrak k,T)<\infty.
\end{align*}
\end{proposition}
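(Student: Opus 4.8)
The plan is to treat the two inequalities separately, since they rest on completely different mechanisms; I expect the upper bound to be soft and the lower bound to be the real content.

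For the finiteness $\sigma(\mathfrak k,T)<\infty$, I would first note that, because $\mathfrak g$ is the Lie algebra of a compact group, each $\ad_X$ is skew-symmetric with respect to $Q$, so $B(X,X)=\tr(\ad_X^2)\le0$ for all $X\in\mathfrak g$; hence $-B|_{\mathfrak n}$ is a positive-semidefinite form on $\mathfrak n$. The last two terms of~\eqref{hat_S_def} are manifestly $\le0$, so $\hat S(h)\le-\frac12\tr_hB|_{\mathfrak n}=\frac12\tr_h(-B|_{\mathfrak n})$ for every $h\in\mathcal M(\mathfrak k)$. Since $T|_{\mathfrak n}$ is positive-definite, there is $C>0$ with $-B|_{\mathfrak n}\le C\,T|_{\mathfrak n}$ as bilinear forms on $\mathfrak n$; taking the trace with respect to any positive-definite $h$ preserves this inequality, so on $\mathcal M_T(\mathfrak k)$ we obtain $\tr_h(-B|_{\mathfrak n})\le C\,\tr_hT|_{\mathfrak n}=C$. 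Thus $\hat S\le C/2$ throughout $\mathcal M_T(\mathfrak k)$, giving $\sigma(\mathfrak k,T)<\infty$. (That the supremum is over a nonempty set follows by scaling any scalar product on $\mathfrak n$ so that $\tr_hT|_{\mathfrak n}=1$.)

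For the lower bound $\sigma(\mathfrak k,T)\ge0$ it suffices to exhibit a single $h_0\in\mathcal M(\mathfrak k)$ with $\hat S(h_0)\ge0$ and then normalise. Indeed, each of the three terms of~\eqref{hat_S_def} is homogeneous of degree $-1$ under $h\mapsto th$, and so is $\tr_hT|_{\mathfrak n}$; hence, with $t_0=\tr_{h_0}T|_{\mathfrak n}>0$, we have $t_0h_0\in\mathcal M_T(\mathfrak k)$ and $\hat S(t_0h_0)=t_0^{-1}\hat S(h_0)\ge0$. The natural candidate is the scalar product induced by $Q$ on $\mathfrak n$, i.e.\ $h_0=Q|_{\mathfrak n}$. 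The key computation is to expand $-\tr_QB|_{\mathfrak n}=\sum_\alpha(-B(f_\alpha,f_\alpha))$ over a $Q$-orthonormal basis $\{f_\alpha\}$ of $\mathfrak n$, using $-B(f_\alpha,f_\alpha)=\sum_Z|[f_\alpha,Z]|_Q^2$ summed over a $Q$-orthonormal basis $Z$ of $\mathfrak g=\mathfrak n\oplus\mathfrak l\oplus\mathfrak h$. Because $\mathfrak k$ is a subalgebra and $\mathfrak l=\mathfrak g\ominus\mathfrak k$, invariance of $Q$ yields the bracket inclusions $[\mathfrak k,\mathfrak l]\subseteq\mathfrak l$, $[\mathfrak h,\mathfrak n]\subseteq\mathfrak n$ and $[\mathfrak n,\mathfrak n]\subseteq\mathfrak k$; so the $\mathfrak l$-contribution to the sum is exactly $|\Delta(\mathfrak l,\mathfrak n,\mathfrak l)|_Q^2$ while $[\mathfrak n,\mathfrak n]$ has no $\mathfrak l$-component. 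Substituting into~\eqref{hat_S_def} (whose mixing term equals $\frac12|\Delta(\mathfrak l,\mathfrak n,\mathfrak l)|_Q^2$ at $h=Q|_{\mathfrak n}$), the $\mathfrak l$-part of $-\frac12\tr_QB|_{\mathfrak n}$ cancels the mixing term exactly, and I would read off
\[
\hat S(Q|_{\mathfrak n})=\tfrac14|\Delta(\mathfrak n,\mathfrak n,\mathfrak n)|_Q^2+\tfrac12\sum_{\alpha,\beta}|\pi_{\mathfrak h}[f_\alpha,f_\beta]|_Q^2+\tfrac12\sum_{\alpha,i}|[f_\alpha,h_i]|_Q^2\ge0,
\]
where $\{h_i\}$ is a $Q$-orthonormal basis of $\mathfrak h$ and $\pi_{\mathfrak h}$ is the $Q$-orthogonal projection onto $\mathfrak h$. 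Equivalently, this identity expresses $\hat S$ as the scalar curvature functional of $K/H$ (for the connected subgroup $K$ with Lie algebra $\mathfrak k$) evaluated at the normal metric, which is nonnegative.

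I expect the lower bound to be the main obstacle. The finiteness is a sign-plus-comparability argument that uses almost no structure, whereas the nonnegativity hinges on the precise cancellation between the mixed term in~\eqref{hat_S_def} and the off-diagonal ($\mathfrak l$) part of the Killing form — which is exactly what makes $Q|_{\mathfrak n}$ a good test metric. The only routine care required is in verifying the three bracket inclusions above and in matching the norms $|\cdot|_Q$, $|\cdot|_{\mathrm{mix}}$ and $|\cdot|_h$ slot by slot, so that the cancellation is legitimate; these steps are straightforward but must be carried out to justify the displayed identity.
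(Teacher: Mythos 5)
Your proof is correct, but it takes a genuinely different route from the paper's: the paper proves Proposition~\ref{prop_sigma_finite} purely by citation, invoking the estimates of \cite[Lemmas~2.22 and~2.23]{MGAP17} and observing that their proofs do not actually use Hypothesis~2.3 of that paper, whereas you give a self-contained argument in the paper's own notation. Both halves of your argument check out. The upper bound (discard the two manifestly nonpositive terms of~\eqref{hat_S_def}, dominate $-B|_{\mathfrak n}$ by $C\,T|_{\mathfrak n}$, and trace against $h$, which preserves the inequality and gives $\hat S\le C/2$ on $\mathcal M_T(\mathfrak k)$) is sound. So is the lower bound: the normalisation step works because $\hat S$ and $h\mapsto\tr_hT|_{\mathfrak n}$ are both homogeneous of degree $-1$; the bracket inclusions $[\mathfrak n,\mathfrak l]\subseteq\mathfrak l$ and $[\mathfrak n,\mathfrak n]\subseteq\mathfrak k$ follow from the $Q$-skewness of $\ad$ together with $\mathfrak k$ being a subalgebra; and with them the $\mathfrak l$-contribution to $-\frac12\tr_QB|_{\mathfrak n}$ cancels the mixed term of~\eqref{hat_S_def} exactly, leaving
\begin{align*}
\hat S(Q|_{\mathfrak n})=\tfrac14|\Delta(\mathfrak n,\mathfrak n,\mathfrak n)|_Q^2+\tfrac12\sum_{\alpha,\beta}|\pi_{\mathfrak h}[f_\alpha,f_\beta]|_Q^2+\tfrac12\sum_{\alpha,i}|[f_\alpha,h_i]|_Q^2\ge0,
\end{align*}
which is indeed the scalar curvature of the normal metric on $K/H$. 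What your route buys is independence from the external reference: the reader need not check, line by line in another paper, that the cited proofs survive without \cite[Hypothesis~2.3]{MGAP17} --- which is precisely the delicate point the paper's one-line proof asks the reader to accept. What the paper's route buys is brevity and uniformity, since the surrounding results (Lemma~\ref{lem_k=m}, Lemma~\ref{lem_eps+max}, Lemma~\ref{lem_key_apic_ex}) all lean on the same machinery of~\cite{MGAP17}, and your lower-bound mechanism is in any case the standard one behind such estimates. The only claim I would temper is your guess that nonnegativity is ``the real content'': both halves are soft, and the cited source treats them as parallel estimates.
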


\begin{proof}
The assertion follows immediately from the estimates on $\hat S$ obtained in~\cite[Lemmas~2.22 and~2.23]{MGAP17}. While these estimates are stated in~\cite{MGAP17} after Hypothesis~2.3 of that paper is imposed, their proofs do not require any assumptions other than those of Proposition~\ref{prop_sigma_finite}.
\end{proof}

\begin{remark}\label{rem_sigma_subalg}
If $\mathfrak k^+$ is a Lie subalgebra of $\mathfrak g$ containing $\mathfrak k$, then
\begin{align}\label{ineq_subs_sigma}
\sigma(\mathfrak k^+,T)\ge\sigma(\mathfrak k,T).
\end{align}
Indeed, fix $h\in\mathcal M_T(\mathfrak k)$ such that
\begin{align*}
\hat S(h)\in(\sigma(\mathfrak k,T)-\delta,\sigma(\mathfrak k,T)]
\end{align*}
for some $\delta>0$. It is possible to construct a curve $(h^+(t))_{t>\tr_QT}$ in the space $\mathcal M_T(\mathfrak k^+)$ satisfying the formula
\begin{align*}
\lim_{t\to\infty}\hat S(h^+(t))=\hat S(h);
\end{align*}
cf.~\cite[Proof of Lemma~2.30]{MGAP17}. The existence of such a curve implies
\begin{align*}
\sigma(\mathfrak k^+,T)\ge\hat S(h)>\sigma(\mathfrak k,T)-\delta
\end{align*}
Inequality~\eqref{ineq_subs_sigma} follows by letting $\delta$ go to~0.
\end{remark}

Next, we state a definition. It will help us formulate Theorem~\ref{thm_gen}.

\begin{definition}\label{def_T-apical}
We call $\mathfrak k$ a $T$-\emph{apical} subalgebra of $\mathfrak g$ if $\mathfrak k$ meets the following requirements:
\begin{enumerate}
\item
The inequality $\mathfrak k\ne\mathfrak g$ holds.

\item
There exists a scalar product $h\in\mathcal M_T(\mathfrak k)$ such that 
\begin{align}\label{max_attained}
\hat S(h)=\sigma(\mathfrak k,T).
\end{align}
\item
If $\mathfrak s$ is a maximal Lie subalgebra of $\mathfrak g$ containing $\mathfrak h$, then
\begin{align*}
\sigma(\mathfrak s,T)\le\sigma(\mathfrak k,T).
\end{align*}
\end{enumerate}
\end{definition}

We conclude this subsection with a result that provides a formula for $\sigma(\mathfrak k,T)$ when the representation $\Ad(H)|_{\mathfrak n}$ is irreducible. We will use this result in our study of generalised Wallach spaces and generalised flag manifolds below.

\begin{proposition}\label{prop_irred}
If $\Ad(H)|_{\mathfrak n}$ is irreducible, then $\mathcal M_T(\mathfrak k)$ consists of a single point. In this case, 
\begin{align}\label{sigma_irred}
\sigma(\mathfrak k,T)=-\frac{2\tr_QB|_{\mathfrak n}+\langle\mathfrak n\mathfrak n\mathfrak n\rangle+2\langle\mathfrak n\mathfrak l\mathfrak l\rangle}{4\tr_QT|_{\mathfrak n}}.
\end{align}
\end{proposition}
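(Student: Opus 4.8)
The plan is to use Schur's lemma to collapse $\mathcal M_T(\mathfrak k)$ to a one-parameter family, pin down the single admissible parameter from the trace constraint, and then evaluate $\hat S$ directly by tracking how each of its three terms scales under that parameter.

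First I would note that, since $\Ad(H)|_{\mathfrak n}$ is irreducible, Schur's lemma forces every $\Ad(H)$-invariant scalar product on $\mathfrak n$ to be a positive multiple of $Q|_{\mathfrak n}$. Hence each $h\in\mathcal M(\mathfrak k)$ has the form $h=y\,Q|_{\mathfrak n}$ for some $y>0$. For such $h$, a $Q$-orthonormal basis of $\mathfrak n$ rescales to an $h$-orthonormal one by the factor $y^{-1/2}$, so that $\tr_h T|_{\mathfrak n}=y^{-1}\tr_Q T|_{\mathfrak n}$. Because $T\in\mathcal M$ is positive-definite, $\tr_Q T|_{\mathfrak n}>0$, and the constraint $\tr_h T|_{\mathfrak n}=1$ has the unique solution $y=\tr_Q T|_{\mathfrak n}$. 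This shows that $\mathcal M_T(\mathfrak k)=\{(\tr_Q T|_{\mathfrak n})\,Q|_{\mathfrak n}\}$ is a single point; consequently the supremum defining $\sigma(\mathfrak k,T)$ is attained, and $\sigma(\mathfrak k,T)=\hat S(h)$ for this specific $h$.

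Next I would compute the three summands of~\eqref{hat_S_def} for $h=y\,Q|_{\mathfrak n}$ using the same rescaling principle. The first term obeys $\tr_h B|_{\mathfrak n}=y^{-1}\tr_Q B|_{\mathfrak n}$. For the mixed term, the norm $|\cdot|_{\mathrm{mix}}$ uses $Q$ on the two $\mathfrak l$-slots and $h$ on the single $\mathfrak n$-slot of $\mathfrak l\otimes\mathfrak n^*\otimes\mathfrak l^*$; replacing $h$ by $y\,Q$ inserts the dual factor $y^{-1}$ in the $\mathfrak n^*$-slot and leaves the $Q$-slots untouched, so that $|\Delta(\mathfrak l,\mathfrak n,\mathfrak l)|_{\mathrm{mix}}^2=y^{-1}\langle\mathfrak l\mathfrak n\mathfrak l\rangle=y^{-1}\langle\mathfrak n\mathfrak l\mathfrak l\rangle$, the last step being the permutation symmetry~\eqref{permut}. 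Likewise, in the $h$-norm on $\mathfrak n\otimes\mathfrak n^*\otimes\mathfrak n^*$, the one covariant and two contravariant $\mathfrak n$-slots contribute the combined factor $y\cdot y^{-1}\cdot y^{-1}=y^{-1}$, giving $|\Delta(\mathfrak n,\mathfrak n,\mathfrak n)|_h^2=y^{-1}\langle\mathfrak n\mathfrak n\mathfrak n\rangle$.

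Assembling these, I would arrive at
\begin{align*}
\hat S(h)=-\frac{1}{4y}\big(2\tr_Q B|_{\mathfrak n}+\langle\mathfrak n\mathfrak n\mathfrak n\rangle+2\langle\mathfrak n\mathfrak l\mathfrak l\rangle\big),
\end{align*}
and substituting $y=\tr_Q T|_{\mathfrak n}$ yields exactly~\eqref{sigma_irred}. The only genuinely delicate point is the bookkeeping of the mixed and $h$-induced norms in the second and third terms: one must keep track that each $\mathfrak n^*$ (dual) slot contributes a factor $y^{-1}$ while each $\mathfrak n$ slot contributes $y$, and that the $Q$-slots of the mixed norm are inert under the scaling. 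Everything else follows directly from Schur's lemma and~\eqref{permut}.
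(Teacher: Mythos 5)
Your proposal is correct and follows essentially the same route as the paper's proof: identify $\mathcal M(\mathfrak k)$ with $\{yQ|_{\mathfrak n}\,|\,y>0\}$ via irreducibility, solve the trace constraint to get $y_0=\tr_QT|_{\mathfrak n}$, and evaluate $\hat S(y_0Q|_{\mathfrak n})$ term by term using the scaling of the induced norms and the permutation symmetry~\eqref{permut}. Your extra bookkeeping of how each $\mathfrak n$ and $\mathfrak n^*$ slot scales is exactly the computation the paper carries out implicitly in its final display.
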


Before presenting the proof, let us restate~\eqref{sigma_irred} in terms of the structure constants of~$M$. This will help us with our computations in Sections~\ref{sec_Wallach} and~\ref{sec_flag}. If $\Ad(H)|_{\mathfrak n}$ is irreducible, it is possible to choose the decomposition~\eqref{m_decomp} so that $\mathfrak n=\mathfrak m_i$ for some $i=1,\ldots,s$. In this case,
\begin{align*}
T|_{\mathfrak n}=z_iQ|_{\mathfrak m_i},\qquad z_i>0.
\end{align*}
We use~\eqref{b_def} and~\eqref{<ijk>[ijk]} to find
\begin{align*}
\tr_QT|_{\mathfrak n}=d_iz_i,\qquad \tr_QB|_{\mathfrak n}=-d_ib_i,\qquad  \langle\mathfrak n\mathfrak n\mathfrak n\rangle
=[iii],\qquad \langle\mathfrak n\mathfrak l\mathfrak l\rangle
=\sum_{j,k\in\{1,\ldots,s\}\setminus\{i\}}[ijk].
\end{align*}
Thus, we can restate~\eqref{sigma_irred} as
\begin{align}\label{sigma_str_irred}
\sigma(\mathfrak k,T)=\frac1{d_iz_i}\bigg(\frac12d_ib_i-\frac14[iii]-\frac12\sum_{j,k\in\{1,\ldots,s\}\setminus\{i\}}[ijk]\bigg).
\end{align}

\begin{proof}[Proof of Proposition~\ref{prop_irred}.]
Assume $\Ad(H)|_{\mathfrak n}$ is irreducible. Then
\begin{align*}
\mathcal M(\mathfrak k)=\{yQ|_{\mathfrak n}\,|\,y>0\}.
\end{align*}
The trace $\tr_{yQ}T|_{\mathfrak n}$ equals $\frac1y\tr_QT|_{\mathfrak n}$ for all $y>0$. Consequently,
\begin{align*}
\mathcal M_T(\mathfrak k)=\{y_0Q|_{\mathfrak n}\},
\end{align*}
where $y_0=\tr_QT|_{\mathfrak n}$. Using~\eqref{hat_S_def} and~\eqref{permut}, we compute
\begin{align*}
\sigma(\mathfrak k,T)=\hat S(y_0Q|_{\mathfrak n})&=-\frac1{2y_0}\tr_QB|_{\mathfrak n}
-\frac1{2y_0}\langle\mathfrak l\mathfrak n\mathfrak l\rangle
-\frac1{4y_0}
\langle\mathfrak n\mathfrak n\mathfrak n\rangle \\ 
&=-\frac{2\tr_QB|_{\mathfrak n}+\langle\mathfrak n\mathfrak n\mathfrak n\rangle+2\langle\mathfrak n\mathfrak l\mathfrak l\rangle}{4\tr_QT|_{\mathfrak n}}.
\end{align*}
\end{proof}

\begin{remark}
Suppose $K$ is the connected Lie subgroup of $G$ whose Lie algebra equals~$\mathfrak k$. The irreducibility assumption on the representation $\Ad(H)|_{\mathfrak n}$ in Proposition~\ref{prop_irred} means that the homogeneous space $K/H$ is isotropy irreducible.
\end{remark}

\section{The general results}\label{sec_gen_results}

Our primary objective in this section is to state and prove an existence theorem for metrics satisfying~\eqref{eq_PRC} on the homogeneous space~$M$.

\subsection{Metrics with prescribed Ricci curvature and $T$-apical subalgebras}\label{subsec_ex_thm}

Theorems~\ref{thm_gen} 
below requires the following hypothesis. The class of homogeneous spaces for which this hypothesis holds is extensive. We discuss examples in Sections~\ref{sec_Wallach} and~\ref{sec_flag}.

\begin{hypothesis}\label{hyp_flag}
Every maximal 
Lie subalgebra $\mathfrak s$ of $\mathfrak g$ such that $\mathfrak h\subset\mathfrak s$ satisfies the following requirement:
if $\mathfrak u\subset\mathfrak s\ominus\mathfrak h$ and $\mathfrak v\subset\mathfrak g\ominus\mathfrak s$ are non-zero $\Ad(H)$-invariant spaces, then the representations $\Ad(H)|_{\mathfrak u}$ and $\Ad(H)|_{\mathfrak v}$ are inequivalent. 
\end{hypothesis}



\begin{remark}\label{rem_hyp1_ineq}
Hypothesis~\ref{hyp_flag} holds if the isotropy representation of $M$ splits into pairwise inequivalent irreducible summands; see~\cite[Proof of Proposition~4.1]{MGAP17}. However, it may be satisfied even if $M$ does not possess this property. We provide an example in Remark~\ref{rem_spheres} below.
\end{remark}

As above, consider a Lie subalgebra $\mathfrak k$ of $\mathfrak g$ such that $\mathfrak h\subset\mathfrak k$ and $\mathfrak h\ne\mathfrak k$. Let $\mathfrak n$ and $\mathfrak l$ be given by~\eqref{def_funny_subsp}.
We are now ready to state our theorem about the solvability of~\eqref{eq_PRC} on~$M$. In Sections~\ref{sec_Wallach} and~\ref{sec_flag}, we will use it to obtain existence results for metrics with prescribed Ricci curvature on generalised Wallach spaces and generalised flag manifolds.

\begin{theorem}\label{thm_gen}
Let Hypothesis~\ref{hyp_flag} hold. Suppose $\mathfrak k$ is a $T$-apical subalgebra of~$\mathfrak g$. If
\begin{align}\label{ineq_main_thm}
4\sigma(\mathfrak k,T)\tr_Q T|_{\mathfrak l}<-2\tr_QB|_{\mathfrak l}-\langle\mathfrak l\mathfrak l\mathfrak l\rangle,
\end{align}
then there exists $g\in\mathcal M_T$ such that $S(g)\ge S(g')$ for all~$g'\in\mathcal M_T$. The Ricci curvature of~$g$ equals $cT$ for some~$c>0$.
\end{theorem}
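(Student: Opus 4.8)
The plan is to show that the supremum $\Sigma=\sup\{S(g')\mid g'\in\mathcal M_T\}$ is attained at an interior point of $\mathcal M_T$, which by Proposition~\ref{prop_var} is automatically a solution of~\eqref{eq_PRC}, and then to check that the associated constant $c$ is positive. The natural strategy is the direct method: take a maximising sequence $(g_\ell)\subset\mathcal M_T$ for $S$, and argue that it cannot run off to the boundary of $\mathcal M_T$. Degeneration can happen in several ways --- eigenvalues of $g_\ell$ going to $0$ or to $\infty$ --- and the role of the hypotheses must be to exclude every such escape. First I would recall from Proposition~\ref{prop_sigma_finite} that $S$ is bounded above on $\mathcal M_T$, so $\Sigma<\infty$ and the maximising sequence is well defined; I would also note $\Sigma\ge\sigma(\mathfrak k,T)$ by definition, since $\sigma(\mathfrak k,T)$ is a supremum over a boundary stratum, and Remark~\ref{rem_sigma_subalg} relates the $\sigma$'s along chains of subalgebras.

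The heart of the argument is a boundary analysis. By the normal-form fact quoted after~\eqref{sc_curv_formula}, along the sequence I may diagonalise each $g_\ell$ as in~\eqref{g_form} with eigenvalues $x_1^{(\ell)},\ldots,x_s^{(\ell)}$ (allowing the decomposition~\eqref{m_decomp} to vary with $\ell$ when equivalent summands are present), and I must control the limiting behaviour of these eigenvalues. The crucial dichotomy is whether the degenerating directions assemble into a subalgebra. The limiting value of $S$ along any boundary degeneration is governed by the extended functional $\hat S$ on the stratum corresponding to the subalgebra generated (together with $\mathfrak h$) by the directions that stay bounded away from collapse; this is precisely what~\eqref{hat_S_def} and~\eqref{hat_S_formula} are built to capture, the mixed term $|\Delta(\mathfrak l,\mathfrak n,\mathfrak l)|_{\rm mix}^2$ recording the contribution of the blown-up directions in $\mathfrak l$. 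I expect the computation to show that the limsup of $S(g_\ell)$ along a degeneration concentrating on a subalgebra $\mathfrak k'$ (with $\mathfrak h\subsetneq\mathfrak k'\subsetneq\mathfrak g$) is at most $\sigma(\mathfrak k',T)$. Hypothesis~\ref{hyp_flag} is what guarantees that the relevant boundary limits really are subalgebra strata rather than some tangle of inequivalent-summand mixing, so that the estimate can be phrased cleanly in terms of the $\sigma$'s of genuine subalgebras.

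The two quantitative hypotheses then close the argument. Since $\mathfrak k$ is $T$-apical, condition~(3) of Definition~\ref{def_T-apical} gives $\sigma(\mathfrak s,T)\le\sigma(\mathfrak k,T)$ for every maximal subalgebra $\mathfrak s\supset\mathfrak h$, and by the monotonicity~\eqref{ineq_subs_sigma} any proper intermediate subalgebra $\mathfrak k'$ satisfies $\sigma(\mathfrak k',T)\le\sigma(\mathfrak s,T)\le\sigma(\mathfrak k,T)$ for some maximal $\mathfrak s\supseteq\mathfrak k'$. Hence no boundary degeneration can exceed $\sigma(\mathfrak k,T)$. To force the supremum to be strictly larger than this boundary value --- and thereby be attained in the interior --- I would exhibit a one-parameter family in $\mathcal M_T$ that crosses the stratum of $\mathfrak k$ and strictly increases $S$ past $\sigma(\mathfrak k,T)$; the first-variation of $S$ in the $\mathfrak l$-directions transverse to the stratum is exactly $-2\tr_QB|_{\mathfrak l}-\langle\mathfrak l\mathfrak l\mathfrak l\rangle-4\sigma(\mathfrak k,T)\tr_QT|_{\mathfrak l}$ up to a positive factor, which is positive precisely by~\eqref{ineq_main_thm}. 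Thus $\Sigma>\sigma(\mathfrak k,T)\ge$ (every boundary value), the maximising sequence stays in a compact part of $\mathcal M_T$, and a limit $g$ exists and is a critical point. Finally, to see $c>0$, I would use $S(g)=\Sigma>0$ (note $\sigma(\mathfrak k,T)\ge0$ by Proposition~\ref{prop_sigma_finite}) together with the contracted second Bianchi identity / the trace relation $\tr_g\Ric g=S(g)$ and $\Ric g=cT$, giving $c\,\tr_gT=c=S(g)>0$.

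The main obstacle I anticipate is making the boundary analysis rigorous when the decomposition~\eqref{m_decomp} is not unique, i.e. when equivalent isotropy summands are present: the directions that collapse need not align with a fixed block decomposition, so one must show that the subspace spanned by the collapsing directions is still $\Ad(H)$-invariant and, together with $\mathfrak h$, forms (or is squeezed between strata corresponding to) a subalgebra to which the $\sigma$-estimates apply. Hypothesis~\ref{hyp_flag} is the tool that rules out pathological mixing between the $\mathfrak s$-directions and the $\mathfrak g\ominus\mathfrak s$-directions, but translating it into a uniform eigenvalue bound along the maximising sequence is the delicate step where most of the real work will lie.
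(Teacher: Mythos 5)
Your proposal is correct and, for the existence part, is essentially the paper's own argument: the paper proves your boundary estimate as Lemma~\ref{lem_eps+max} (outside a compact set $\mathcal C(\kappa(\epsilon),T)$ one has $S\le\epsilon+\max_m\sigma(\mathfrak k_m,T)$, with Hypothesis~\ref{hyp_flag} entering through Lemma~\ref{lem_k=m}, which is exactly your point that degenerations must align with subalgebra strata), it uses requirement~3 of Definition~\ref{def_T-apical} together with Remark~\ref{rem_sigma_subalg} just as you do to cap every boundary value by $\sigma(\mathfrak k,T)$, and it realises your ``crossing family'' as the explicit curve $h(t)=\sum_{j\in J_{\mathfrak k}}\phi(t)y_j\pi_{\mathfrak m_j}^*Q+\sum_{j\in J_{\mathfrak k}^c}t\pi_{\mathfrak m_j}^*Q$, $\phi(t)=t/(t-\tr_QT|_{\mathfrak l})$, for which $\lim_{t\to\infty}S(h(t))=\hat S(h)=\sigma(\mathfrak k,T)$ (this is where requirement~2, attainment at $h$, is used) and $\lim_{t\to\infty}t^2\tfrac{d}{dt}S(h(t))=\sigma(\mathfrak k,T)\tr_QT|_{\mathfrak l}+\tfrac14\bigl(2\tr_QB|_{\mathfrak l}+\langle\mathfrak l\mathfrak l\mathfrak l\rangle\bigr)<0$ by~\eqref{ineq_main_thm} --- precisely your first-variation claim, so $S(h(t_0))>\sigma(\mathfrak k,T)$ for suitable finite $t_0$ and the maximum is attained in a compact set, after which Proposition~\ref{prop_var} applies. (Like the paper, you leave the hard boundary lemma to be proved; the paper outsources it to~\cite{MGAP17} once Lemma~\ref{lem_k=m} is in place, and your description of the role of Hypothesis~\ref{hyp_flag} there is accurate.) Where you genuinely diverge is the positivity of $c$: the paper invokes Bochner's theorem to get $c\ge0$ and then rules out $c=0$ by showing that a Ricci-flat invariant metric would force every isotropy summand to be trivial and one-dimensional, contradicting Hypothesis~\ref{hyp_flag}; you instead trace $\Ric g=cT$ against $g$ to get $c=c\tr_gT=\tr_g\Ric g=S(g)$, and $S(g)\ge S(h(t_0))>\sigma(\mathfrak k,T)\ge0$ by Proposition~\ref{prop_sigma_finite}. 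This is valid, noticeably shorter, avoids the second appeal to Hypothesis~\ref{hyp_flag} (and to Bochner entirely), and has the bonus of identifying $c$ explicitly as the maximal value of $S$ on~$\mathcal M_T$; the only ingredient it needs beyond what the existence argument already supplies is the lower bound $\sigma(\mathfrak k,T)\ge0$, which the paper provides.
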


\begin{remark}\label{rem_spheres}
Suppose $G=Sp(k+1)$ and $H=Sp(k)$ for some $k\in\mathbb N$. In this case, $M$ is a sphere of dimension~$4k+3$. The isotropy representation of $M$ splits into four irreducible summands, three of which are equivalent to each other; see~\cite{WZ82}. Hypothesis~\ref{hyp_flag} holds, and Theorem~\ref{thm_gen} can be used to show that $S$ attains its global maximum on~$\mathcal M_T$ under certain conditions. The prescribed Ricci curvature problem on homogeneous spheres will be studied carefully in the forthcoming paper~\cite{TBAPYRWZ}.
\end{remark}

\begin{remark}
According to~\cite[Lemma~2.15]{MGAP17}, the quantity on the right-hand side of~\eqref{ineq_main_thm} is necessarily non-negative.
\end{remark}


\begin{remark}
As explained in~\cite[Remark~2.2]{MGAP17}, the restriction of $S$ to $\mathcal M_T$ is always bounded above but rarely bounded below.
\end{remark}

\begin{remark}
The properness of the restriction of $S$ to $\mathcal M_T$ is discussed in~\cite[Remark~2.11]{MGAP17}.
\end{remark}

We will prove Theorem~\ref{thm_gen} in Subsection~\ref{subsec_proofs}. In the meantime, let us restate condition~\eqref{ineq_main_thm} in terms of the structure constants of~$M$. Suppose the decomposition~\eqref{m_decomp} is such that the first formula in~\eqref{n_h_diag} holds
for some $J_{\mathfrak k}\subset\{1,\ldots,s\}$. In this case,
\begin{align}\label{tr_B_b}
\mathfrak l=\bigoplus_{i\in J_{\mathfrak k}^c}\mathfrak m_i,\qquad
\tr_QB|_{\mathfrak l}=-\sum_{i\in J_{\mathfrak k}^c}d_ib_i.
\end{align}
(Recall that $J_{\mathfrak k}^c$ denotes the complement of $J_{\mathfrak k}$.)
Exploiting~\eqref{<ijk>[ijk]}, we find that condition~\eqref{ineq_main_thm} holds if and only if
\begin{align}\label{cond_str_const}
\sigma(\mathfrak k,T)\tr_QT|_{\mathfrak l}<\frac12\sum_{i\in J_{\mathfrak k}^c}d_ib_i-\frac14\sum_{i,j,k\in J_{\mathfrak k}^c}[ijk].
\end{align}

The following strengthened version of Hypothesis~\ref{hyp_flag} is closely related to~\cite[Hypothesis~2.3]{MGAP17}. It will help us obtain our next result.

\begin{hypothesis}\label{hyp_strong}
Every Lie subalgebra $\mathfrak s$ of $\mathfrak g$ such that $\mathfrak h\subset\mathfrak s$ satisfies the following requirement:
if $\mathfrak u\subset\mathfrak s\ominus\mathfrak h$ and $\mathfrak v\subset\mathfrak g\ominus\mathfrak s$ are non-zero $\Ad(H)$-invariant spaces, then the representations $\Ad(H)|_{\mathfrak u}$ and $\Ad(H)|_{\mathfrak v}$ are inequivalent. 
\end{hypothesis}

\begin{remark}\label{rem_hyp2_ineq}
Hypothesis~\ref{hyp_strong} is necessarily satisfied if the isotropy representation of $M$ splits into pairwise inequivalent irreducible summands; see~\cite[Proof of Proposition~4.1]{MGAP17}.
\end{remark}

Theorem~\ref{thm_gen} is moot if $\mathfrak g$ does not have any $T$-apical subalgebras. Our next result shows that, under Hypothesis~\ref{hyp_strong}, at least one such subalgebra must exist. We prove this result in Subsection~\ref{subsec_pr_apic_ex}.

\begin{theorem}\label{thm_apic_ex}
Let Hypothesis~\ref{hyp_strong} hold. Suppose $\mathfrak h$ is not maximal in~$\mathfrak g$. Then $\mathfrak g$ has at least one $T$-apical subalgebra.
\end{theorem}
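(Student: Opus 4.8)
The plan is to prove existence of a $T$-apical subalgebra by finding one that maximises the parameter $\sigma(\cdot,T)$ among all proper subalgebras containing $\mathfrak h$. First I would consider the collection $\mathcal S$ of all Lie subalgebras $\mathfrak k$ with $\mathfrak h\subset\mathfrak k\subsetneq\mathfrak g$. Since $\mathfrak h$ is not maximal by hypothesis, $\mathcal S$ is non-empty: any maximal subalgebra $\mathfrak s$ properly containing $\mathfrak h$ lies in $\mathcal S$. Because $G$ is compact, it has only finitely many conjugacy classes of subalgebras containing $\mathfrak h$, and $\sigma(\mathfrak k,T)$ is invariant under $\Ad(H)$-conjugacy; moreover by Proposition~\ref{prop_sigma_finite} each value $\sigma(\mathfrak k,T)$ is finite. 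Hence the set of real numbers $\{\sigma(\mathfrak k,T)\mid\mathfrak k\in\mathcal S\}$ is finite, so it attains a maximum. Let $\mathfrak k_0\in\mathcal S$ be a subalgebra realising this maximum.

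Next I would verify that $\mathfrak k_0$ satisfies the three conditions of Definition~\ref{def_T-apical}. Condition~(1), that $\mathfrak k_0\ne\mathfrak g$, holds by construction since $\mathfrak k_0\in\mathcal S$. Condition~(3), that $\sigma(\mathfrak s,T)\le\sigma(\mathfrak k_0,T)$ for every maximal subalgebra $\mathfrak s$ containing $\mathfrak h$, is immediate: each such $\mathfrak s$ belongs to $\mathcal S$, so $\sigma(\mathfrak s,T)$ does not exceed the maximal value $\sigma(\mathfrak k_0,T)$. The substantive work is condition~(2): I must show the supremum $\sigma(\mathfrak k_0,T)=\sup\{\hat S(h)\mid h\in\mathcal M_T(\mathfrak k_0)\}$ is actually attained by some $h\in\mathcal M_T(\mathfrak k_0)$.

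The main obstacle is establishing this attainment, and this is where Hypothesis~\ref{hyp_strong} must enter. The natural strategy is to argue by contradiction: if the supremum defining $\sigma(\mathfrak k_0,T)$ were not attained, then along a maximising sequence in $\mathcal M_T(\mathfrak k_0)$ the scalar products must degenerate, with some eigenvalue directions collapsing. The collapsing directions should span an $\Ad(H)$-invariant subspace $\mathfrak n'\subsetneq\mathfrak n_0=\mathfrak k_0\ominus\mathfrak h$, which together with $\mathfrak h$ generates an intermediate subalgebra $\mathfrak k_1$ with $\mathfrak h\subsetneq\mathfrak k_1\subsetneq\mathfrak k_0$. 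One then expects a limiting/splitting argument, entirely analogous to the non-attainment analysis in~\cite{MGAP17} (see the estimates behind Proposition~\ref{prop_sigma_finite} and the curve construction in Remark~\ref{rem_sigma_subalg}), showing that $\sigma(\mathfrak k_1,T)\ge\sigma(\mathfrak k_0,T)$ while in fact $\sigma(\mathfrak k_1,T)>\sigma(\mathfrak k_0,T)$ strictly whenever the supremum is not attained. Hypothesis~\ref{hyp_strong} is precisely what guarantees the relevant inequivalence of isotropy summands across the splitting $\mathfrak k_1\ominus\mathfrak h$ versus $\mathfrak g\ominus\mathfrak k_1$, so that the asymptotic behaviour of $\hat S$ decouples cleanly and the strict gain is genuine. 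This strict inequality contradicts the maximality of $\sigma(\mathfrak k_0,T)$ over $\mathcal S$, since $\mathfrak k_1\in\mathcal S$. Therefore the supremum must be attained, condition~(2) holds, and $\mathfrak k_0$ is $T$-apical.

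The delicate points I would need to handle carefully are the compactness input guaranteeing finitely many $\sigma$-values (so that a genuine maximum, not merely a supremum, exists over $\mathcal S$), and the coercivity/splitting estimate that converts non-attainment of the supremum into the existence of a strictly better intermediate subalgebra. I expect the latter to be the crux, and I would lean heavily on the quantitative bounds of~\cite[Lemmas~2.22 and~2.23]{MGAP17} together with the boundary-behaviour analysis of $\hat S$ under Hypothesis~\ref{hyp_strong}, adapting the reasoning already sketched for~\cite[Lemma~2.30]{MGAP17}.
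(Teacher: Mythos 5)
There is a fatal gap at the crux of your argument: the contradiction you aim for can never occur. If $\mathfrak k_1$ is a subalgebra with $\mathfrak h\subsetneq\mathfrak k_1\subsetneq\mathfrak k_0$, then Remark~\ref{rem_sigma_subalg} (which you yourself cite) gives the monotonicity $\sigma(\mathfrak k_1,T)\le\sigma(\mathfrak k_0,T)$; the strict inequality $\sigma(\mathfrak k_1,T)>\sigma(\mathfrak k_0,T)$ that you want to extract from non-attainment is therefore impossible, no matter what splitting or coercivity estimates you invoke. What non-attainment actually yields (this is \cite[Lemma~2.28]{MGAP17}, used in the paper's Lemma~\ref{lem_key_apic_ex}) is only $\sigma(\mathfrak k_1,T)\ge\sigma(\mathfrak k_0,T)$ for some maximal subalgebra $\mathfrak k_1$ of $\mathfrak k_0$ containing $\mathfrak h$ properly --- hence \emph{equality}, which is perfectly consistent with $\mathfrak k_0$ maximising $\sigma$ over $\mathcal S$, so no contradiction arises. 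Worse, the statement you are trying to prove by contradiction is simply false: a maximiser of $\sigma$ over $\mathcal S$ can genuinely fail requirement~2 of Definition~\ref{def_T-apical}. Example~\ref{example_F4} exhibits this: when $z_2/z_4\ge7/4$ and $z_3/z_4\ge3/8$, the maximal subalgebra $\mathfrak t_3$ attains the largest $\sigma$-value among all proper subalgebras containing $\mathfrak h$, yet the supremum of $\hat S$ over $\mathcal M_T(\mathfrak t_3)$ is not attained; the $T$-apical subalgebra there is the strictly smaller $\mathfrak t_2\subsetneq\mathfrak t_3$, which has the \emph{same} $\sigma$-value but with attainment. So if your recipe happens to select $\mathfrak k_0=\mathfrak t_3$, it breaks down.

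The repair is exactly the paper's route: do not insist that the maximiser itself be $T$-apical, but descend inside it until attainment occurs. The paper fixes a maximal subalgebra $\mathfrak k_{j_0}$ of $\mathfrak g$ with largest $\sigma$ among the finitely many maximal ones, and proves by induction on $\dim(\mathfrak s\ominus\mathfrak h)$ (Lemma~\ref{lem_key_apic_ex}) that every subalgebra $\mathfrak s\supsetneq\mathfrak h$ contains a subalgebra $\mathfrak k\supsetneq\mathfrak h$ on which the supremum \emph{is} attained and $\sigma(\mathfrak k,T)\ge\sigma(\mathfrak s,T)$: if attainment fails for $\mathfrak s$, then $\mathfrak h$ cannot be maximal in $\mathfrak s$ (by \cite[Lemma~2.32]{MGAP17} attainment would hold), one passes to the maximal subalgebra of $\mathfrak s$ with largest $\sigma$, shows its $\sigma$-value is no smaller than $\sigma(\mathfrak s,T)$, and recurses; the base case, where $\mathcal M_T$ is a single point, gives attainment for free. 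Requirements~1 and~3 then follow as in your argument. Separately, your finiteness step is also misjustified: compactness of $G$ does not bound the number of conjugacy classes of subalgebras containing $\mathfrak h$ (a torus already has a continuum of subalgebras), and in any case invariance of $\sigma$ under $\Ad(H)$-conjugacy does not match finiteness of $G$-conjugacy classes. The finiteness used in the paper comes from Hypothesis~\ref{hyp_strong} via \cite[Corollary~2.13 and Remark~2.34]{MGAP17}, which force every subalgebra containing $\mathfrak h$ properly to have the form $\big(\bigoplus_{i\in J}\mathfrak m_i\big)\oplus\mathfrak h$; and by Remark~\ref{rem_sigma_subalg} you would only have needed the maximal ones anyway.
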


\subsection{Proof of Theorem~\ref{thm_gen}}\label{subsec_proofs}

Throughout this subsection, we assume Hypothesis~\ref{hyp_flag} holds. Our arguments will rely on the consequence of this hypothesis given by Lemma~\ref{lem_k=m} below. It provides a description of some of the Lie subalgebras of $\mathfrak g$ in terms of the decomposition~\eqref{m_decomp} chosen in Subsection~\ref{subsec_str_const}. We emphasise that, as explained in Remarks~\ref{rem_hyp1_ineq} and~\ref{rem_spheres}, the representations $\Ad(H)|_{\mathfrak m_i}$ and $\Ad(H)|_{\mathfrak m_j}$ may be equivalent for $i\ne j$.

\begin{lemma}\label{lem_k=m}
If $\mathfrak s$ is a maximal Lie subalgebra of $\mathfrak g$ such that $\mathfrak h\subset\mathfrak s$ and $\mathfrak h\ne\mathfrak s$, then there exists a unique set $J_{\mathfrak s}\subset\{1,\ldots,s\}$ satisfying the formula
\begin{align*}
\mathfrak s=\Big(\bigoplus_{i\in J_{\mathfrak s}}{\mathfrak m}_i\Big)\oplus\mathfrak h.
\end{align*}
\end{lemma}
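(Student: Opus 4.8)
The plan is to show that each irreducible summand $\mathfrak m_i$ lies entirely in $\mathfrak n=\mathfrak s\ominus\mathfrak h$ or entirely in $\mathfrak l=\mathfrak g\ominus\mathfrak s$, and then to set $J_{\mathfrak s}=\{i\,|\,\mathfrak m_i\subset\mathfrak n\}$. First I would record that $\mathfrak s$ is $\Ad(H)$-invariant: since $H$ is connected and $\mathfrak s$ is a subalgebra containing $\mathfrak h$, we have $[\mathfrak h,\mathfrak s]\subset\mathfrak s$, which gives invariance under $\Ad(H)$. As $\mathfrak n$ is $Q$-orthogonal to $\mathfrak h$, it is contained in $\mathfrak m=\mathfrak g\ominus\mathfrak h$, and likewise $\mathfrak l\subset\mathfrak m$; thus $\mathfrak n$ and $\mathfrak l$ are $\Ad(H)$-invariant subspaces of $\mathfrak m$ with $\mathfrak m=\mathfrak n\oplus\mathfrak l$ a $Q$-orthogonal decomposition.

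Next I would fix an index $i$ and examine the $Q$-orthogonal projections $\pi_{\mathfrak n}|_{\mathfrak m_i}\colon\mathfrak m_i\to\mathfrak n$ and $\pi_{\mathfrak l}|_{\mathfrak m_i}\colon\mathfrak m_i\to\mathfrak l$. Both are $\Ad(H)$-equivariant, and since $\id_{\mathfrak m}=\pi_{\mathfrak n}+\pi_{\mathfrak l}$, at least one of them is non-zero. Because $\Ad(H)|_{\mathfrak m_i}$ is irreducible, the kernel of each projection is either $\{0\}$ or all of $\mathfrak m_i$; hence a non-zero projection is injective, and its image is an $\Ad(H)$-invariant subspace of $\mathfrak n$ (respectively $\mathfrak l$) equivalent to $\mathfrak m_i$.

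The crux of the argument—and the only place Hypothesis~\ref{hyp_flag} enters—is to rule out the possibility that both projections are non-zero. If they were, then $\pi_{\mathfrak n}(\mathfrak m_i)\subset\mathfrak s\ominus\mathfrak h$ and $\pi_{\mathfrak l}(\mathfrak m_i)\subset\mathfrak g\ominus\mathfrak s$ would be non-zero $\Ad(H)$-invariant spaces, each equivalent to $\mathfrak m_i$ and therefore equivalent to one another; this contradicts the hypothesis, which applies precisely because $\mathfrak s$ is maximal. Thus exactly one projection vanishes, forcing $\mathfrak m_i\subset\mathfrak n$ or $\mathfrak m_i\subset\mathfrak l$. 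Summing orthogonally over $i\in J_{\mathfrak s}$ and comparing with $\mathfrak m=\mathfrak n\oplus\mathfrak l$ gives $\mathfrak n=\bigoplus_{i\in J_{\mathfrak s}}\mathfrak m_i$, whence $\mathfrak s=\big(\bigoplus_{i\in J_{\mathfrak s}}\mathfrak m_i\big)\oplus\mathfrak h$.

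For uniqueness I would invoke only the $Q$-orthogonality of the fixed decomposition~\eqref{m_decomp}: for any subset $J\subset\{1,\ldots,s\}$ the subspace $\bigoplus_{i\in J}\mathfrak m_i$ contains $\mathfrak m_k$ if and only if $k\in J$, so it determines $J$ unambiguously, with no appeal to inequivalence of the summands. The main subtlety to keep in mind is that, with equivalent summands present, Schur's lemma is used only in the weak form ``a non-zero equivariant map out of an irreducible is injective,'' which is valid over $\mathbb R$; the transitivity of equivalence then yields the required contradiction. This step is the genuine content of the lemma, but it is short once the projections are set up.
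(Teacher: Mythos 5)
Your proposal is correct and follows essentially the approach the paper intends: the paper's proof simply defers to \cite[Proof of Lemma~2.12]{MGAP17} with Hypothesis~\ref{hyp_flag} substituted for \cite[Hypothesis~2.3]{MGAP17}, and your argument reconstructs exactly that reasoning (equivariant $Q$-orthogonal projections of each irreducible summand onto $\mathfrak s\ominus\mathfrak h$ and $\mathfrak g\ominus\mathfrak s$, Schur's lemma to make a non-zero projection an equivalence onto its image, and Hypothesis~\ref{hyp_flag} to exclude both projections being non-zero). The uniqueness argument via $Q$-orthogonality of the fixed decomposition~\eqref{m_decomp} is also sound.
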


\begin{proof}
It suffices to repeat the reasoning from~\cite[Proof of Lemma~2.12]{MGAP17} using Hypothesis~\ref{hyp_flag} instead of~\cite[Hypothesis~2.3]{MGAP17}.
\end{proof}

In this subsection, we assume $\mathfrak k$ is a $T$-apical subalgebra of~$\mathfrak g$. By definition,
\begin{align}\label{incl_k_proper}
\mathfrak h\subset\mathfrak k,\qquad\mathfrak h\ne\mathfrak k,\qquad \mathfrak k\ne\mathfrak g.
\end{align}
Let $\mathfrak k_1,\ldots,\mathfrak k_r$ be all the maximal Lie subalgebras of $\mathfrak g$ containing $\mathfrak h$ as a proper subset. Lemma~\ref{lem_k=m} implies that there are only finitely many such subalgebras. The fact that at least one exists follows from~\eqref{incl_k_proper}. Given $\tau>0$, define
\begin{align*}
\mathcal C(\tau,T)=\{g\in\mathcal M_T\,|\,g(X,X)\le\tau~\mbox{for all}~X\in\mathfrak m~\mbox{such that}~Q(X,X)=1\}.
\end{align*}
It is easy to verify that this set is compact in $\mathcal M_T$; cf.~\cite[Lemma~2.24]{MGAP17}. To prove Theorem~\ref{thm_gen}, we will need the following estimate for the scalar curvature functional~$S$.

\begin{lemma}\label{lem_eps+max}
Given $\epsilon>0$, there exists $\kappa(\epsilon)>0$ such that
\begin{align}\label{est_eps+max}
S(g)\le\epsilon+\max_{m=1,\ldots,r}\sigma(\mathfrak k_m,T)
\end{align}
for every $g\in\mathcal M_T\setminus\mathcal C(\kappa(\epsilon),T)$.
\end{lemma}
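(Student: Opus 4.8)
The plan is to argue by contradiction. Suppose the assertion fails for some $\epsilon>0$; then for every $n\in\mathbb N$ there is a metric $g_n\in\mathcal M_T\setminus\mathcal C(n,T)$ with $S(g_n)>\epsilon+\max_m\sigma(\mathfrak k_m,T)$. Since $g_n\notin\mathcal C(n,T)$, the largest eigenvalue of $g_n$ relative to $Q$ exceeds $n$ and hence tends to infinity. My first observation is that $g_n$ can degenerate only by blowing up, never by collapsing: writing the constraint $\tr_{g_n}T=1$ in a $Q$-orthonormal $g_n$-eigenbasis gives $\sum_\alpha x_\alpha^{-1}T(e_\alpha,e_\alpha)=1$ with every summand positive, so each eigenvalue $x_\alpha$ of $g_n$ is at least the smallest eigenvalue $\lambda_0>0$ of $T$ relative to $Q$, uniformly in $n$. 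In particular $-\tfrac12\tr_{g_n}B|_{\mathfrak m}$ stays bounded, since $B|_{\mathfrak m}\le0$ and the eigenvalues are bounded below by $\lambda_0$.

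Next I would separate the blowing-up directions from the bounded ones. Let $A_n$ be the $Q$-self-adjoint operator representing $g_n$, fix a threshold $M$, and let $\mathfrak n_n$ and $\mathfrak l_n$ be the $\Ad(H)$-invariant spans of the eigenvectors of $A_n$ with eigenvalues $\le M$ and $>M$, respectively. Choosing $M$ so that, after passing to a subsequence, no eigenvalue accumulates near $M$, the dimensions stabilise, the splittings converge to $Q$-orthogonal $\Ad(H)$-invariant subspaces $\mathfrak m=\mathfrak n\oplus\mathfrak l$, the bounded eigenvalues converge to finite limits in $[\lambda_0,M]$, and $g_n|_{\mathfrak n_n}$ converges to an $\Ad(H)$-invariant scalar product $h$ on $\mathfrak n$. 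Set $\mathfrak k=\mathfrak n\oplus\mathfrak h$. The constraint forces $\mathfrak n\ne\{0\}$ (otherwise $\tr_{g_n}T\to0$) and $\mathfrak l\ne\{0\}$ (since some eigenvalue diverges), so $\mathfrak h\subsetneq\mathfrak k\subsetneq\mathfrak g$; moreover the contribution of $\mathfrak l_n$ to $\tr_{g_n}T$ tends to $0$, whence $\tr_h T|_{\mathfrak n}=1$, i.e. $h\in\mathcal M_T(\mathfrak k)$.

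The key step, and the one I expect to be the main obstacle, is to show that $\mathfrak k$ is a Lie subalgebra. I would extract this from the lower bound on $S(g_n)$. In \eqref{sc_curv_basf} the term $-\tfrac14|\Delta(\mathfrak m,\mathfrak m,\mathfrak m)|_{g_n}^2$ is nonpositive while $-\tfrac12\tr_{g_n}B|_{\mathfrak m}$ is bounded, so the hypothesis $S(g_n)>\epsilon+\max_m\sigma(\mathfrak k_m,T)$ keeps $|\Delta(\mathfrak m,\mathfrak m,\mathfrak m)|_{g_n}^2$ bounded. The component $\Delta(\mathfrak l_n,\mathfrak n_n,\mathfrak n_n)$ contributes to this squared norm a quantity weighted by $x_{\mathrm{out}}/(x_{\mathrm{in}}x_{\mathrm{in}})$ with output eigenvalue $>M\to\infty$ and bounded inputs; boundedness therefore forces $\pi_{\mathfrak l_n}[\mathfrak n_n,\mathfrak n_n]\to0$, so in the limit $\pi_{\mathfrak l}[\mathfrak n,\mathfrak n]=0$, that is, $[\mathfrak n,\mathfrak n]\subseteq\mathfrak k$. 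Together with $[\mathfrak h,\mathfrak n]\subseteq\mathfrak n$ and $[\mathfrak h,\mathfrak h]\subseteq\mathfrak h$ this makes $\mathfrak k$ a subalgebra. The delicate point is that the diagonalising data depend on $n$ and that equivalent isotropy summands may rotate as $n\to\infty$; for this reason the limiting argument must be run for the $\Ad(H)$-invariant spaces $\mathfrak n_n,\mathfrak l_n$ directly, rather than for a fixed decomposition \eqref{m_decomp}, and it is here that the passage to the limit requires the most care.

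Finally I would compare $\limsup_n S(g_n)$ with $\hat S(h)$. Splitting $|\Delta(\mathfrak m,\mathfrak m,\mathfrak m)|_{g_n}^2$ according to how many factors lie in $\mathfrak l_n$: the all-$\mathfrak n_n$ part tends to $|\Delta(\mathfrak n,\mathfrak n,\mathfrak n)|_h^2$; the part with one factor in $\mathfrak n_n$ and two in $\mathfrak l_n$ is bounded below, after applying the elementary inequality $t+t^{-1}\ge2$ to the ratios of the two diverging eigenvalues, by $2|\Delta(\mathfrak l,\mathfrak n,\mathfrak l)|_{\mathrm{mix}}^2$ in the limit; and the remaining parts are nonnegative. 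Combined with $\tr_{g_n}B|_{\mathfrak m}\to\tr_h B|_{\mathfrak n}$ and \eqref{hat_S_def}, this yields $\limsup_n S(g_n)\le\hat S(h)$. Since $h\in\mathcal M_T(\mathfrak k)$ we have $\hat S(h)\le\sigma(\mathfrak k,T)$, and because $\mathfrak k$ lies inside some maximal subalgebra $\mathfrak k_m$ properly containing $\mathfrak h$, Remark~\ref{rem_sigma_subalg} gives $\sigma(\mathfrak k,T)\le\sigma(\mathfrak k_m,T)\le\max_m\sigma(\mathfrak k_m,T)$. Thus $\limsup_n S(g_n)\le\max_m\sigma(\mathfrak k_m,T)$, contradicting $S(g_n)>\epsilon+\max_m\sigma(\mathfrak k_m,T)$. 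Lemma~\ref{lem_k=m}, which rests on Hypothesis~\ref{hyp_flag}, is what guarantees that the maximal subalgebras $\mathfrak k_1,\dots,\mathfrak k_r$ are finite in number, so that this maximum is meaningful and the argument closes.
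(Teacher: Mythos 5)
Your proposal is correct: every step checks out, including the two points where the real content lies (the uniform lower bound $x_\alpha\ge\lambda_0$ coming from $\tr_{g_n}T=1$, which rules out collapse, and the fact that boundedness of $|\Delta(\mathfrak m,\mathfrak m,\mathfrak m)|_{g_n}^2$ forces $\pi_{\mathfrak l}[\mathfrak n,\mathfrak n]=0$ in the limit, so that $\mathfrak k=\mathfrak n\oplus\mathfrak h$ is a subalgebra), and the AM--GM step correctly reproduces the coefficient $-\tfrac12$ of the mixed term in \eqref{hat_S_def}. However, your route is organised differently from the paper's. The paper's proof is a two-line adaptation: it repeats the argument of \cite[Proof of Lemma~2.28]{MGAP17} verbatim, substituting Lemma~\ref{lem_k=m} for \cite[Lemma~2.12]{MGAP17}; that imported argument is phrased in terms of decompositions \eqref{m_decomp} adapted to the metric, so the diagonal description of subalgebras provided by Lemma~\ref{lem_k=m} (hence Hypothesis~\ref{hyp_flag}) is woven into the degeneration analysis itself. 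You instead run the limit on the two-block $\Ad(H)$-invariant splitting $\mathfrak m=\mathfrak n_n\oplus\mathfrak l_n$ taken directly from the spectral decomposition of $g_n$, which sidesteps the issue of equivalent summands rotating with $n$ and never invokes the diagonal structure of subalgebras; Hypothesis~\ref{hyp_flag} then enters your argument only through the finiteness of the list $\mathfrak k_1,\dots,\mathfrak k_r$, with the final comparison handled by $\hat S(h)\le\sigma(\mathfrak k,T)$, the containment of $\mathfrak k$ in some maximal subalgebra, and Remark~\ref{rem_sigma_subalg}. What the paper's route buys is brevity by reuse of existing machinery; what yours buys is a self-contained proof and a slightly stronger conclusion, since it would establish the estimate with $\max_m\sigma(\mathfrak k_m,T)$ replaced by the supremum of $\sigma(\mathfrak s,T)$ over all maximal $\mathfrak s\supsetneq\mathfrak h$ even without Hypothesis~\ref{hyp_flag}. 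In a full write-up you would still need to spell out the routine compactness details (passing to a subsequence so that the ordered eigenvalues converge in $[\lambda_0,+\infty]$, choosing the threshold $M$ above all finite limits, convergence of $\mathfrak n_n$, $\mathfrak l_n$ in the Grassmannian and of the operators representing $g_n|_{\mathfrak n_n}$), but none of these present any obstacle.
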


\begin{proof}
We repeat the arguments from~\cite[Proof of Lemma~2.28]{MGAP17} using Lemma~\ref{lem_k=m} above instead of~\cite[Lemma~2.12]{MGAP17}. As a result, we find
\begin{align*}
\hat S(g)\le\epsilon+\max_{m=1,\ldots,r}\sup\{\hat S(h)\,|\,h\in\mathcal M_T(\mathfrak k_m)\}, \qquad g\in\mathcal M_T\setminus \mathcal C(\kappa(\epsilon),T),
\end{align*}
for some $\kappa(\epsilon)>0$. With this estimate at hand,~\eqref{est_eps+max} follows from the definition of~$\sigma(\mathfrak k_m,T)$ and the fact that $\hat S$ coincides with $S$ on~$\mathcal M_T$.
\end{proof}

Next, we will demonstrate that $S$ attains its global maximum on $\mathcal M_T$ at some $g\in\mathcal M_T$ if condition~\eqref{ineq_main_thm} holds. Lemma~\ref{prop_var} will then imply equality~\eqref{eq_PRC} for this~$g$ with $c\in\mathbb R$. Once~\eqref{eq_PRC} is established, we will use Hypothesis~\ref{hyp_flag} to show that~$c>0$.

\begin{proof}[Proof of Theorem~\ref{thm_gen}.]
Since $\mathfrak k$ is a $T$-apical subalgebra of~$\mathfrak g$, there exists $h\in\mathcal M_T(\mathfrak k)$ such that~\eqref{max_attained} holds. Assume the decomposition~\eqref{m_decomp} is chosen so that $\mathfrak k$ and $h$ satisfy~\eqref{n_h_diag} for some $J_{\mathfrak k}\subset\{1,\ldots,s\}$. This assumption does not lead to loss of generality; see~\cite[page~180]{MWWZ86}. As above, let $\mathfrak l$ be given by~\eqref{def_funny_subsp}. For $t>\tr_QT|_{\mathfrak l}$, define a scalar product $h(t)\in\mathcal M_T$ by the formulas
\begin{align*}
h(t)=\sum_{j\in J_{\mathfrak k}}\phi(t)y_j\pi_{\mathfrak m_j}^*Q+\sum_{j\in J_{\mathfrak k}^c}t\pi_{\mathfrak m_j}^*Q,
\qquad
\phi(t)=\frac t{t-\tr_QT|_{\mathfrak l}}.
\end{align*}
Assuming~\eqref{ineq_main_thm} holds, we will show that 
\begin{align}\label{S>max}
S(h(t_0))>\max_{m=1,\ldots,r}\sigma(\mathfrak k_m,T)
\end{align}
for some~$t_0$. This inequality and Lemma~\ref{lem_eps+max} will imply the existence of $g\in\mathcal M_T$ such that $S(g)\ge S(g')$ for all $g'\in\mathcal M_T$.

With the aid of~\eqref{sc_curv_formula}, we find
\begin{align*}
S(h(t))&=\frac12\sum_{j\in J_{\mathfrak k}}\frac{d_jb_j}{\phi(t)y_j}+\frac12\sum_{j\in J_{\mathfrak k}^c}\frac{d_jb_j}t-\frac14\sum_{j,k,l\in J_{\mathfrak k}}[jkl]\frac{y_l}{\phi(t)y_jy_k}
-\frac14\sum_{j,k,l\in J_{\mathfrak k}^c}\frac{[jkl]}t
\\
&\hphantom{=}~-\frac12\sum_{j\in J_{\mathfrak k}}\sum_{k,l\in J_{\mathfrak k}^c}\frac{[jkl]}{\phi(t)y_j}-\frac14\sum_{j\in J_{\mathfrak k}}\sum_{k,l\in J_{\mathfrak k}^c}[jkl]\frac{\phi(t)y_j}{t^2}
\\
&\hphantom{=}~-\frac12\sum_{j,k\in J_{\mathfrak k}}\sum_{l\in J_{\mathfrak k}^c}[jkl]\frac{y_k}{ty_j}-\frac14\sum_{j,k\in J_{\mathfrak k}}\sum_{l\in J_{\mathfrak k}^c}[jkl]\frac t{\phi^2(t)y_jy_k}.
\end{align*}
The two terms in the last line are~0. Indeed, $[jkl]$ equals~0 if $j,k\in J_{\mathfrak k}$ and $l\in J_{\mathfrak k}^c$ because $\mathfrak k$ is a subalgebra of $\mathfrak g$; see~\cite[Lemma~2.14]{MGAP17}. Exploiting~\eqref{hat_S_formula}, \eqref{tr_B_b} and~\eqref{<ijk>[ijk]}, we conclude
\begin{align}\label{Sht_big_formula}
S(h(t))&=\frac1{\phi(t)}\bigg(\frac12\sum_{j\in J_{\mathfrak k}}\frac{d_jb_j}{y_j}-\frac12\sum_{j\in J_{\mathfrak k}}\sum_{k,l\in J_{\mathfrak k}^c}\frac{[jkl]}{y_j}-\frac14\sum_{j,k,l\in J_{\mathfrak k}}[jkl]\frac{y_l}{y_jy_k}\bigg) \notag
\\
&\hphantom{=}~+\frac1t\bigg(\frac12\sum_{j\in J_{\mathfrak k}^c}d_jb_j
-\frac14\sum_{j,k,l\in J_{\mathfrak k}^c}[jkl]\bigg)-\frac{\phi(t)}{4t^2}\sum_{j\in J_{\mathfrak k}}\sum_{k,l\in J_{\mathfrak k}^c}[jkl]y_j \notag
\\
&=\frac{\hat S(h)}{\phi(t)}-\frac1{4t}(2\tr_QB|_{\mathfrak l}
+\langle\mathfrak l\mathfrak l\mathfrak l\rangle)-\frac{\phi(t)}{4t^2}\sum_{j\in J_{\mathfrak k}}\sum_{k,l\in J_{\mathfrak k}^c}[jkl]y_j.
\end{align}
Since $\mathfrak k$ is $T$-apical,
\begin{align}\label{eq_lim_apic}
\lim_{t\to\infty}S(h(t))=\hat S(h)\ge\max_{m=1,\ldots,r}\sigma(\mathfrak k_m,T).
\end{align}
Our next step is to show that $\frac d{dt}S(h(t))<0$ when~\eqref{ineq_main_thm} holds and $t$ is large. This will imply the existence of $t_0$ satisfying~\eqref{S>max}.

It is clear that $\frac d{dt}S(h(t))$ has the same sign as $t^2\frac d{dt}S(h(t))$. Therefore, to prove that $\frac d{dt}S(h(t))<0$ for large~$t$, it suffices to show that
\begin{align*}
\lim_{t\to\infty}t^2\frac d{dt}S(h(t))<0.
\end{align*}
Using~\eqref{Sht_big_formula}, we obtain
\begin{align*}
t^2\frac d{dt}S(h(t))&=\hat S(h)\tr_QT|_{\mathfrak l}+\frac12\tr_QB|_{\mathfrak l}+\frac14\langle\mathfrak l\mathfrak l\mathfrak l\rangle+\frac{2t-\tr_QT|_{\mathfrak l}}{4(t-\tr_QT|_{\mathfrak l})^2}\sum_{j\in J_{\mathfrak k}}\sum_{k,l\in J_{\mathfrak k}^c}[jkl]y_j.
\end{align*}
If~\eqref{ineq_main_thm} holds, then
\begin{align*}
\lim_{t\to\infty}t^2\frac d{dt}S(h(t))&=\hat S(h)\tr_QT|_{\mathfrak l}+\frac12\tr_QB|_{\mathfrak l}+\frac14\langle\mathfrak l\mathfrak l\mathfrak l\rangle
\\
&=\sigma(\mathfrak k,T)\tr_QT|_{\mathfrak l}+\frac{2\tr_QB|_{\mathfrak l}+\langle\mathfrak l\mathfrak l\mathfrak l\rangle}4<0.
\end{align*}
In this case, $\frac d{dt}S(h(t))<0$ for large~$t$, which implies the existence of $t_0$ such that
\begin{align*}
S(h(t_0))>\lim_{t\to\infty}S(h(t)).
\end{align*}
Formula~\eqref{eq_lim_apic} plainly shows that $t_0$ satisfies~\eqref{S>max}. Using Lemma~\ref{lem_eps+max} with
\begin{align*}
\epsilon=\frac{S(h(t_0))-\max_{m=1,\ldots,r}\sigma(\mathfrak k_m,T)}2>0,
\end{align*}
we conclude that
\begin{align}\label{est_outside_C}
S(g')<S(h(t_0)),\qquad g'\in\mathcal M_T\setminus\mathcal C(\kappa(\epsilon),T).
\end{align}
Since the set~$\mathcal C(\kappa(\epsilon),T)$ is compact, the functional $S$ attains its global maximum on $\mathcal C(\kappa(\epsilon),T)$ at some $g\in\mathcal C(\kappa(\epsilon),T)$. Inequality~\eqref{est_outside_C} implies that, in fact, $S(g')\le S(g)$ for all $g'\in\mathcal M_T$. By Proposition~\ref{prop_var}, $g$ satisfies~\eqref{eq_PRC} for some $c\in\mathbb R$. The proof will be complete if we show that $c>0$.

According to Bochner's theorem (see~\cite[Theorem~1.84]{AB87}), there are no $G$-invariant Riemannian
metrics on $M$ with negative-definite Ricci curvature. It follows that $c\ge0$. Let us show that no $G$-invariant metric on $M$ can be Ricci-flat. This will imply~$c\ne0$.

We proceed by contradiction. Assume $M$ supports a Ricci-flat $G$-invariant metric. Using Bochner's theorem again, we can show that
\begin{align*}
[\mathfrak m,\mathfrak h]=\{0\};
\end{align*}
cf.~\cite[Subsection~2.7]{MGAP17}. Consequently, given~$i=1,\ldots,s$, the representation $\Ad(H)|_{\mathfrak m_i}$ is trivial. By irreducibility, the dimension of this representation is~1.

Let us prove that the maximal Lie subalgebra $\mathfrak k_1$ of $\mathfrak g$ fails to satisfy the requirement of Hypothesis~\ref{hyp_flag}. This will give us the contradiction we are seeking. By Lemma~\ref{lem_k=m}, the space $\mathfrak k_1\ominus\mathfrak h$ equals $\bigoplus_{i\in J_{\mathfrak k_1}}\mathfrak m_i$ for some $J_{\mathfrak k_1}\subset\{1,\ldots,s\}$. If $k$ and $l$ lie in $J_{\mathfrak k_1}$ and $J_{\mathfrak k_1}^c$, respectively, then
\begin{align*}
\mathfrak m_k\subset\mathfrak k_1\ominus\mathfrak h,\qquad \mathfrak m_l\subset\mathfrak g\ominus\mathfrak k_1.
\end{align*}
As we showed above, the representations $\Ad(H)|_{\mathfrak m_k}$ and $\Ad(H)|_{\mathfrak m_l}$ are trivial and 1-dimensional.
Therefore, these representations are equivalent, which means $\mathfrak k_1$ does not satisfy the requirement of Hypothesis~\ref{hyp_flag}.
\end{proof}

\subsection{Proof of Theorem~\ref{thm_apic_ex}}\label{subsec_pr_apic_ex}

In this subsection, we assume Hypothesis~\ref{hyp_strong} holds and $\mathfrak h$ is not maximal in~$\mathfrak g$. The proof of Theorem~\ref{thm_apic_ex} will rely on the following result.

\begin{lemma}\label{lem_key_apic_ex}
If $\mathfrak s$ is a Lie subalgebra of $\mathfrak g$ such that $\mathfrak h\subset\mathfrak s$ and $\mathfrak h\ne\mathfrak s$, then there exist a Lie subalgebra $\mathfrak k\subset\mathfrak s$ and a scalar product $h\in\mathcal M_T(\mathfrak k)$ satisfying the formulas 
\begin{align}\label{apic_ex_flas}
\mathfrak h\subset\mathfrak k,\qquad \mathfrak h\ne\mathfrak k,\qquad \hat S(h)=\sigma(\mathfrak k,T)\ge\sigma(\mathfrak s,T).
\end{align}
\end{lemma}

\begin{proof}
Denote by $p$ the dimension of $\mathfrak s\ominus\mathfrak h$. We proceed by induction in~$p$. The space $\mathcal M_T(\mathfrak s)$ consists of a single point $h_0$ if $p=1$. Formulas~\eqref{apic_ex_flas} are evident in this case for $\mathfrak k=\mathfrak s$ and $h=h_0$.

Let the assertion of the lemma hold when $p\le m$. Our goal is to prove it supposing $p=m+1$. If $\hat S$ has a global maximum on $\mathcal M_T(\mathfrak s)$ at some $h_0\in\mathcal M_T(\mathfrak s)$, then formulas~\eqref{apic_ex_flas} are easy to check for~$\mathfrak k=\mathfrak s$ and $h=h_0$. Therefore, we will focus on the case where the supremum of $\hat S$ over $\mathcal M_T(\mathfrak s)$ is not attained. In this case, $\mathfrak k$ and $\mathfrak s$ cannot coincide.

Suppose $\mathfrak s_1,\ldots,\mathfrak s_q$ are all the maximal Lie subalgebras of $\mathfrak s$ containing $\mathfrak h$ as a proper subset. By~\cite[Corollary~2.13 and Remark~2.34]{MGAP17}, there are only finitely many such subalgebras. Under our assumptions, at least one must exist. Otherwise, $\mathfrak h$ would be maximal in~$\mathfrak s$, and~\cite[Lemma~2.32 and Remark~2.34]{MGAP17} would imply that $\hat S$ has a global maximum on~$\mathcal M_T(\mathfrak s)$.

Choose an index $i_0$ such that
\begin{align*}
\sigma(\mathfrak s_{i_0},T)=\max_{i=1,\ldots,q}\sigma(\mathfrak s_i,T).
\end{align*}
Evidently, the dimension of $\mathfrak s_{i_0}$ is less than or equal to~$m$.
By the induction hypothesis, there exist a Lie subalgebra $\mathfrak k\subset\mathfrak s_{i_0}$ and a scalar product $h\in\mathcal M_T(\mathfrak k)$ satisfying formulas~\eqref{apic_ex_flas} with $\mathfrak s$ replaced by~$\mathfrak s_{i_0}$. The proof will be complete if we show that
\begin{align}\label{sigma<sigma}
\sigma(\mathfrak s,T)\le\sigma(\mathfrak s_{i_0},T).
\end{align}

Pick $\epsilon>0$. According to~\cite[Lemma~2.28 and Remark~2.34]{MGAP17},
\begin{align}\label{h'<eps+sigma}
\hat S(h')\le\epsilon+\sigma(\mathfrak s_{i_0},T)
\end{align}
whenever $h'\in\mathcal M_T(\mathfrak s)$ lies outside some compact subset $\mathcal C_\epsilon$ of $\mathcal M_T(\mathfrak s)$. We will demonstrate that, in fact,~\eqref{h'<eps+sigma} holds for all $h'\in\mathcal M_T(\mathfrak s)$. Estimate~\eqref{sigma<sigma} will follow by taking the supremum on the left-hand side of~\eqref{h'<eps+sigma} and letting $\epsilon$ go to~0.

Since $\mathcal C_{\epsilon}$ is compact, there exists $h_0'\in\mathcal C_\epsilon$ such that 
\begin{align*}
\hat S(h_0')\ge\hat S(h'),\qquad h'\in\mathcal C_{\epsilon}.\end{align*}
Clearly, if
\begin{align*}
\hat S(h_0')>\epsilon+\sigma(\mathfrak s_{i_0},T),
\end{align*}
then $\hat S$ has a global maximum on $\mathcal M_T(\mathfrak s)$ at~$h_0'$. However, we are assuming that the supremum of $\hat S$ over $\mathcal M_T(\mathfrak s)$ is not attained. Thus,~\eqref{h'<eps+sigma} holds for all $h'\in\mathcal M_T(\mathfrak s)$.
\end{proof}

\begin{remark}
In view of Remark~\ref{rem_sigma_subalg}, we can replace the third formula in~\eqref{apic_ex_flas} by
\begin{align*}
\hat S(h)=\sigma(\mathfrak k,T)=\sigma(\mathfrak s,T).
\end{align*}
\end{remark}


As above, denote by $\mathfrak k_1,\ldots,\mathfrak k_r$ the maximal Lie subalgebras of $\mathfrak g$ containing $\mathfrak h$ as a proper subset. With Hypothesis~\ref{hyp_strong} at hand, one can invoke Lemma~\ref{lem_k=m} or~\cite[Corollary~2.13 and Remark~2.34]{MGAP17} to conclude that there are only finitely many such subalgebras. The fact that at least one exists follows from the assumption that $\mathfrak h$ is not maximal in~$\mathfrak g$.

\begin{proof}[Proof of Theorem~\ref{thm_apic_ex}.]
Choose $j_0$ such that
\begin{align*}
\sigma(\mathfrak k_{j_0},T)=\max_{j=1,\ldots,r}\sigma(\mathfrak k_j,T).
\end{align*}
Applying Lemma~\ref{lem_key_apic_ex} with $\mathfrak s=\mathfrak k_{j_0}$, we obtain a Lie subalgebra $\mathfrak k$ of $\mathfrak k_{j_0}$ and a scalar product $h\in\mathcal M_T(\mathfrak k)$ satisfying formulas~\eqref{apic_ex_flas} with $\mathfrak s=\mathfrak k_{j_0}$. It is obvious that $\mathfrak k\ne\mathfrak g$ and~\eqref{max_attained} holds. Furthermore,
\begin{align*}
\sigma(\mathfrak k_j,T)\le\sigma(\mathfrak k_{j_0},T)\le\sigma(\mathfrak k,T),\qquad j=1,\ldots,r.
\end{align*}
Thus, $\mathfrak k$ is $T$-apical.
\end{proof}

\section{Generalised Wallach spaces}\label{sec_Wallach}

Throughout Section~\ref{sec_Wallach}, we assume $M$ is a generalised Wallach space, as defined in~\cite{YN16}. Theorem~\ref{thm_gen} and Proposition~\ref{prop_irred} yield an easy-to-verify sufficient condition for the solvability of~\eqref{eq_PRC} on such spaces. Our goal is to state this condition, prove it and consider an example.

Since $M$ is a generalised Wallach space, the group $G$ is semisimple. Consequently, the Killing form $B$ is non-degenerate. It will be convenient for us to set $Q=-B$.  The definition of a generalised Wallach space implies that $s=3$ in the decomposition~\eqref{m_decomp}. Thus, the formula
\begin{align}\label{m3_dec}
\mathfrak m=\mathfrak m_1\oplus\mathfrak m_2\oplus\mathfrak m_3
\end{align}
holds. 

We assume the representations $\Ad(H)|_{\mathfrak m_1}$, $\Ad(H)|_{\mathfrak m_2}$ and $\Ad(H)|_{\mathfrak m_3}$ are pairwise inequivalent. 
The summands $\mathfrak m_1,\mathfrak m_2,\mathfrak m_3$ in the decomposition~\eqref{m3_dec} are determined uniquely up to order. Appealing to the definition of a generalised Wallach space again, we obtain the inclusion
\begin{align}\label{incl_Wallach_mi}
[\mathfrak m_i,\mathfrak m_i]\subset\mathfrak h,\qquad i=1,2,3.
\end{align}
As a consequence, the structure constant $[ijk]$ vanishes unless $(i,j,k)$ is a permutation of $\{1,2,3\}$. We assume $[123]$ is positive. If $[123]=0$, then all the metrics in $\mathcal M$ have the same Ricci curvature; cf.~\cite[Lemma~3.2]{APYRsubm}. In this case, the analysis of~\eqref{eq_PRC} is easy.

Since $\Ad(H)|_{\mathfrak m_1}$, $\Ad(H)|_{\mathfrak m_2}$ and $\Ad(H)|_{\mathfrak m_3}$ are pairwise inequivalent, the formula
\begin{align*}
T=-z_1\pi_{\mathfrak m_1}^*B-z_2\pi_{\mathfrak m_2}^*B-z_3\pi_{\mathfrak m_3}^*B
\end{align*}
holds for some $z_1,z_2,z_3>0$. Denote by $d$ the dimension of~$M$. Clearly, $d=d_1+d_2+d_3$.
We are now ready to state the main result of this section.

\begin{theorem}\label{thm_Wallach}
Suppose $p\in\{1,2,3\}$ is such that
\begin{align}\label{cond_Wallach}
\frac{d_p-2[123]}{2d_pz_p}=\max_{i=1,2,3}\frac{d_i-2[123]}{2d_iz_i}.
\end{align}
If
\begin{align}\label{cond2_W}
(d_p-2[123])\sum_{i\in\{1,2,3\}\setminus\{p\}}d_iz_i<(d-d_p)d_pz_p,
\end{align}
then there exists a metric $g\in\mathcal M_T$ with Ricci curvature $cT$ for some~$c>0$.
\end{theorem}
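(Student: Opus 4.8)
The plan is to specialise Theorem~\ref{thm_gen} to the Wallach setting by identifying a suitable $T$-apical subalgebra $\mathfrak k$ and then verifying condition~\eqref{ineq_main_thm} using the structure-constant form~\eqref{cond_str_const}. Since the three summands are pairwise inequivalent and satisfy $[\mathfrak m_i,\mathfrak m_i]\subset\mathfrak h$, each $\mathfrak k_i=\mathfrak m_i\oplus\mathfrak h$ is a Lie subalgebra of $\mathfrak g$, and Hypothesis~\ref{hyp_flag} holds automatically by Remark~\ref{rem_hyp1_ineq}. The natural candidate for $\mathfrak k$ is the subalgebra $\mathfrak k_p=\mathfrak m_p\oplus\mathfrak h$ corresponding to the index $p$ singled out by~\eqref{cond_Wallach}; then $\mathfrak n=\mathfrak m_p$ and $\mathfrak l=\bigoplus_{i\ne p}\mathfrak m_i$.

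First I would check that $\mathfrak k_p$ is $T$-apical. Requirement~(1) of Definition~\ref{def_T-apical} is immediate since $\mathfrak k_p\ne\mathfrak g$. Because $\Ad(H)|_{\mathfrak m_p}$ is irreducible, Proposition~\ref{prop_irred} shows $\mathcal M_T(\mathfrak k_p)$ is a single point, so requirement~(2) holds trivially and $\sigma(\mathfrak k_p,T)$ is given by formula~\eqref{sigma_str_irred}. Using $[ppp]=0$ and $\sum_{j,k\ne p}[pjk]=2[123]$ (each ordered pair from $\{1,2,3\}\setminus\{p\}$ contributes $[123]$), together with $b_p=1$ since $Q=-B$ forces $b_i=1$ for all $i$, I expect
\[
\sigma(\mathfrak k_p,T)=\frac{1}{d_pz_p}\Big(\tfrac12 d_p-\tfrac12\cdot 2[123]\Big)=\frac{d_p-2[123]}{2d_pz_p}.
\]
Requirement~(3) is precisely condition~\eqref{cond_Wallach}: the maximal subalgebras containing $\mathfrak h$ are among the $\mathfrak k_i$, and~\eqref{cond_Wallach} asserts that $\sigma(\mathfrak k_p,T)$ dominates $\sigma(\mathfrak k_i,T)$ for every $i$. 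Here I would need to confirm that every maximal $\mathfrak s$ with $\mathfrak h\subset\mathfrak s$, $\mathfrak h\ne\mathfrak s$, is of the form $\mathfrak k_i$ (Lemma~\ref{lem_k=m}) so that no other maximal subalgebra can violate~(3).

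Next I would translate the hypothesis~\eqref{cond2_W} into~\eqref{ineq_main_thm} via~\eqref{cond_str_const}. On $\mathfrak l=\bigoplus_{i\ne p}\mathfrak m_i$ we have $\tr_QT|_{\mathfrak l}=\sum_{i\ne p}d_iz_i$, while $\sum_{i\in J_{\mathfrak k_p}^c}d_ib_i=\sum_{i\ne p}d_i=d-d_p$, and the cubic term $\sum_{i,j,k\ne p}[ijk]$ vanishes because any nonzero $[ijk]$ needs all three distinct indices but only two indices are available in $\{1,2,3\}\setminus\{p\}$. Thus~\eqref{cond_str_const} becomes
\[
\frac{d_p-2[123]}{2d_pz_p}\sum_{i\ne p}d_iz_i<\frac12(d-d_p),
\]
which after clearing the factor $\tfrac12$ is exactly~\eqref{cond2_W}. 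With $\mathfrak k_p$ shown $T$-apical and~\eqref{ineq_main_thm} verified, Theorem~\ref{thm_gen} delivers the desired $g\in\mathcal M_T$ with $\Ric g=cT$ and $c>0$.

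The main obstacle I anticipate is the bookkeeping in requirement~(3) rather than any deep difficulty: I must be sure that the quantities $\sigma(\mathfrak k_i,T)$ computed from~\eqref{sigma_str_irred} are the relevant ones for \emph{all} maximal subalgebras, which relies on Lemma~\ref{lem_k=m} forcing every maximal $\mathfrak s\supsetneq\mathfrak h$ to split along the $\mathfrak m_i$. A secondary point to handle carefully is the counting of the mixed structure constants — confirming $\sum_{j,k\ne p}[pjk]=2[123]$ and the vanishing of $\sum_{i,j,k\ne p}[ijk]$ — both of which hinge on $[ijk]$ being supported on permutations of $\{1,2,3\}$.
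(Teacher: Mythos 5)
Your proposal follows the paper's proof almost verbatim: take $\mathfrak k_p=\mathfrak m_p\oplus\mathfrak h$ as the candidate $T$-apical subalgebra, use Proposition~\ref{prop_irred} for requirement~2, compute $\sigma(\mathfrak k_i,T)=\frac{d_i-2[123]}{2d_iz_i}$ from~\eqref{sigma_str_irred}, and translate~\eqref{ineq_main_thm} via~\eqref{cond_str_const} into~\eqref{cond2_W}. All of these computations are correct and coincide with the paper's. There is, however, one genuine gap, located exactly at the point you flagged: the verification of requirement~3 of Definition~\ref{def_T-apical}. You appeal to Lemma~\ref{lem_k=m} to conclude that every maximal Lie subalgebra $\mathfrak s$ with $\mathfrak h\subset\mathfrak s$, $\mathfrak h\ne\mathfrak s$ is one of the $\mathfrak k_i$. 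But Lemma~\ref{lem_k=m} only asserts that such an $\mathfrak s$ has the form $\big(\bigoplus_{i\in J_{\mathfrak s}}\mathfrak m_i\big)\oplus\mathfrak h$ for some $J_{\mathfrak s}\subset\{1,2,3\}$; it does not exclude $|J_{\mathfrak s}|=2$, i.e.\ a maximal subalgebra $\mathfrak m_j\oplus\mathfrak m_k\oplus\mathfrak h$. If such a subalgebra existed, comparing the three numbers $\sigma(\mathfrak k_i,T)$ would not establish requirement~3, and you could not evaluate its $\sigma$ from~\eqref{sigma_str_irred} either, since its isotropy representation is reducible and Proposition~\ref{prop_irred} would not apply.

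The paper closes this hole with a dedicated statement (Lemma~\ref{lem_Wallach_subal}): the only proper Lie subalgebras of $\mathfrak g$ containing $\mathfrak h$ properly are $\mathfrak k_1$, $\mathfrak k_2$, $\mathfrak k_3$. Each $\mathfrak k_i$ is a subalgebra by~\eqref{incl_Wallach_mi}, as you noted; the crucial additional step is that $\mathfrak m_j\oplus\mathfrak m_k\oplus\mathfrak h$ (with $j\ne k$) is \emph{not} a subalgebra, because $[123]>0$ forces $\pi_{\mathfrak m_l}[\mathfrak m_j,\mathfrak m_k]\ne\{0\}$ for the remaining index $l$, so this space is not closed under the bracket. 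This is also the only place where the standing assumption $[123]>0$ of Section~\ref{sec_Wallach} enters the proof, and your argument never invokes it. Once this classification is substituted for your appeal to Lemma~\ref{lem_k=m}, the rest of your proposal goes through exactly as written.
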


\begin{remark}\label{rem_W_const}
The numbers $d_1$, $d_2$, $d_3$ and $[123]$ for the generalised Wallach spaces satisfying the assumptions of this section are given in~\cite[Table~1]{YN16}; see also~\cite[Tables~1 and~2]{ZCYKKL16}.
\end{remark}


To prove the theorem, we need the following result.

\begin{lemma}\label{lem_Wallach_subal}
The proper Lie subalgebras of $\mathfrak g$ containing $\mathfrak h$ as a proper subset are
\begin{align*}
\mathfrak k_1=\mathfrak m_1\oplus\mathfrak h,\qquad \mathfrak k_2=\mathfrak m_2\oplus\mathfrak h,\qquad \mathfrak k_3=\mathfrak m_3\oplus\mathfrak h.
\end{align*}
\end{lemma}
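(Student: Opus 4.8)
The plan is to prove the claimed set equality by a double inclusion: first verify that each $\mathfrak k_i=\mathfrak m_i\oplus\mathfrak h$ is a proper Lie subalgebra strictly containing $\mathfrak h$, and then show that there are no others. The first half is a one-line bracket computation. Since $\mathfrak m_i$ is $\Ad(H)$-invariant we have $[\mathfrak h,\mathfrak m_i]\subset\mathfrak m_i$; combining this with $[\mathfrak m_i,\mathfrak m_i]\subset\mathfrak h$ (which is~\eqref{incl_Wallach_mi}) and $[\mathfrak h,\mathfrak h]\subset\mathfrak h$ gives $[\mathfrak k_i,\mathfrak k_i]\subset\mathfrak k_i$. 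Strict containment of $\mathfrak h$ and properness follow because each $\mathfrak m_i\ne\{0\}$ and $s=3$, so $\mathfrak k_i$ is neither $\mathfrak h$ nor $\mathfrak g$.

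For the converse, I would take an arbitrary Lie subalgebra $\mathfrak k$ with $\mathfrak h\subsetneq\mathfrak k\subsetneq\mathfrak g$ and first argue that it must be built out of the $\mathfrak m_i$. Because $\mathfrak k$ is a subalgebra containing $\mathfrak h$, it is $\ad(\mathfrak h)$-invariant, and as $H$ is connected it is $\Ad(H)$-invariant; hence $\mathfrak n=\mathfrak k\ominus\mathfrak h$ is an $\Ad(H)$-invariant subspace of $\mathfrak m$. The key structural step is then to invoke multiplicity-freeness: since $\Ad(H)|_{\mathfrak m_1}$, $\Ad(H)|_{\mathfrak m_2}$, $\Ad(H)|_{\mathfrak m_3}$ are irreducible and pairwise inequivalent, Schur's lemma forces every $\Ad(H)$-invariant subspace of $\mathfrak m$ to be a coordinate sum $\bigoplus_{i\in J}\mathfrak m_i$ for some $J\subseteq\{1,2,3\}$. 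Thus $\mathfrak k=\big(\bigoplus_{i\in J}\mathfrak m_i\big)\oplus\mathfrak h$, where $J\ne\emptyset$ (as $\mathfrak h\ne\mathfrak k$) and $J\ne\{1,2,3\}$ (as $\mathfrak k\ne\mathfrak g$).

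It then remains to exclude $|J|=2$. Suppose $i,j\in J$ are distinct and let $k$ be the remaining index, so $k\notin J$. Since $\mathfrak m_i,\mathfrak m_j\subset\mathfrak k$ and $\mathfrak k$ is a subalgebra, $[\mathfrak m_i,\mathfrak m_j]\subset\mathfrak k$; as $\mathfrak k$ is $Q$-orthogonal to $\mathfrak m_k$, the $Q$-orthogonal projection of $[\mathfrak m_i,\mathfrak m_j]$ onto $\mathfrak m_k$ must vanish. But the squared norm of that projection equals $[kij]=[123]>0$ by the permutation symmetry~\eqref{permut}, so the projection is nonzero, a contradiction. Hence $|J|=1$, which means $\mathfrak k$ equals one of $\mathfrak k_1,\mathfrak k_2,\mathfrak k_3$, completing the converse inclusion.

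The only genuinely delicate point is the Schur-lemma reduction of $\Ad(H)$-invariant subspaces to coordinate sums $\bigoplus_{i\in J}\mathfrak m_i$: this is exactly where the pairwise-inequivalence hypothesis of this section is indispensable, since otherwise invariant subspaces could cut diagonally across equivalent summands and the conclusion would fail. Everything else reduces to the elementary bracket relations and to the single positivity fact $[123]>0$, which is what prevents any two of the $\mathfrak m_i$ from being absorbed together into a common proper subalgebra.
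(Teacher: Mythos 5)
Your proof is correct and takes essentially the same route as the paper's: both verify that each $\mathfrak k_i$ is a subalgebra via~\eqref{incl_Wallach_mi}, use Schur's lemma together with the pairwise inequivalence of the isotropy summands to show that any intermediate subalgebra must be of the form $\big(\bigoplus_{i\in J}\mathfrak m_i\big)\oplus\mathfrak h$, and invoke $[123]>0$ to rule out the two-summand case. You simply spell out details the paper leaves implicit, namely the explicit bracket relations and the projection-norm contradiction.
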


\begin{proof}
Formula~\eqref{incl_Wallach_mi} implies that 
\begin{align*}
[\mathfrak k_i,\mathfrak k_i]\subset\mathfrak k_i,\qquad i=1,2,3.
\end{align*}
Thus, $\mathfrak k_i$ is indeed a Lie subalgebra of $\mathfrak g$ for $i=1,2,3$.

As we noted above, the summands in~\eqref{m3_dec} are determined uniquely up to order. Using this observation, one can show that every $\Ad(H)$-invariant subspace of $\mathfrak m$ appears as $\bigoplus_{i\in\mathcal J}\mathfrak m_i$, where $\mathcal J\subset\{1,2,3\}$. Let $\mathfrak k$ be a Lie subalgebra of $\mathfrak g$ such that $\mathfrak h\subset\mathfrak k$, $\mathfrak h\ne\mathfrak k$ and $\mathfrak k\ne\mathfrak g$. We will prove that $\mathfrak k=\mathfrak k_i$ for some $i\in\{1,2,3\}$. Clearly, $\mathfrak k\ominus\mathfrak h$ is a proper non-zero $\Ad(H)$-invariant subspace of~$\mathfrak m$. Therefore, $\mathfrak k$ equals $\mathfrak m_i\oplus\mathfrak h$ for some $i\in\{1,2,3\}$ or $\mathfrak m_j\oplus\mathfrak m_k\oplus\mathfrak h$ for some distinct $j,k\in\{1,2,3\}$. Since $[123]>0$, the space $\mathfrak m_j\oplus\mathfrak m_k\oplus\mathfrak h$ cannot be a Lie subalgebra of~$\mathfrak g$. This means $\mathfrak k=\mathfrak k_i$ for some $i\in\{1,2,3\}$.
\end{proof}

\begin{proof}[Proof of Theorem~\ref{thm_Wallach}.]
Because $\Ad(H)|_{\mathfrak m_1}$, $\Ad(H)|_{\mathfrak m_2}$ and $\Ad(H)|_{\mathfrak m_3}$ are pairwise inequivalent, Hypothesis~\ref{hyp_flag} holds for~$M$; see Remark~\ref{rem_hyp1_ineq}. Let us show that 
\begin{align*}
\mathfrak k_p=\mathfrak m_p\oplus\mathfrak h
\end{align*}
is a $T$-apical subalgebra of~$\mathfrak g$. This will enable us to apply Theorem~\ref{thm_gen}. The existence of $g$ under condition~\eqref{cond2_W} will follow immediately.

It is obvious that $\mathfrak k_p$ cannot equal~$\mathfrak g$. Ergo, $\mathfrak k_p$ satisfies requirement~1 of Definition~\ref{def_T-apical}. By Proposition~\ref{prop_irred}, the space $\mathcal M_T(\mathfrak k_p)$ consists of a single point. Denoting this point by $h_0$, we see that equality~\eqref{max_attained} holds with $h=h_0$ and $\mathfrak k=\mathfrak k_p$. Consequently, $\mathfrak k_p$ satisfies requirement~2 of Definition~\ref{def_T-apical}. By Lemma~\ref{lem_Wallach_subal}, the maximal subalgebras of $\mathfrak g$ containing $\mathfrak h$ as a proper subset are $\mathfrak k_1$, $\mathfrak k_2$ and $\mathfrak k_3$. Since $Q=-B$, the numbers $b_1$, $b_2$ and $b_3$ given by~\eqref{b_def} all equal~1. Exploiting~\eqref{sigma_str_irred}, \eqref{incl_Wallach_mi} and~\eqref{cond_Wallach}, we obtain
\begin{align*}
\sigma(\mathfrak k_i,T)=\frac{d_i-2[123]}{2d_iz_i}\le\frac{d_p-2[123]}{2d_pz_p}=\sigma(\mathfrak k_p,T),\qquad i=1,2,3.
\end{align*}
This means $\mathfrak k_p$ satisfies requirement~3 of Definition~\ref{def_T-apical}.

By Theorem~\ref{thm_gen} and formula~\eqref{cond_str_const}, a metric $g\in\mathcal M_T$ with Ricci curvature $cT$ for some $c>0$ exists if
\begin{align*}
\sigma(\mathfrak k_p,T)\tr_QT|_{\mathfrak g\ominus\mathfrak k_p}<\frac12\sum_{i\in\{1,2,3\}\setminus\{p\}}d_i-\frac14\sum_{i,j,k \in\{1,2,3\}\setminus\{p\}}[ijk].
\end{align*}
Using~\eqref{sigma_str_irred} and~\eqref{incl_Wallach_mi} again, we find
\begin{align*}
\sigma(\mathfrak k_p,T)\tr_QT|_{\mathfrak g\ominus\mathfrak k_p}&=\frac{d_p-2[123]}{2d_pz_p}\sum_{i\in\{1,2,3\}\setminus\{p\}}d_iz_i,\\
\frac12\sum_{i\in\{1,2,3\}\setminus\{p\}}d_i&-\frac14\sum_{i,j,k \in\{1,2,3\}\setminus\{p\}}[ijk]=\frac{d-d_p}2.
\end{align*}
The assertion of Theorem~\ref{thm_Wallach} follows immediately.
\end{proof}

\begin{example}
Suppose $M$ is the generalised Wallach space $E_6/Sp(3)\times Sp(1)$. Let the decomposition~\eqref{m3_dec} be as in~\cite{YN16}. We have
\begin{align*}
d_1=14,\qquad d_2=28,\qquad d_3=12,\qquad [123]=7/2;
\end{align*}
see Remark~\ref{rem_W_const} above. A simple computation shows that
\begin{align*}
\frac{d_1-2[123]}{2d_1z_1}=\frac1{4z_1},\qquad 
\frac{d_2-2[123]}{2d_2z_2}=\frac3{8z_2},\qquad 
\frac{d_3-2[123]}{2d_3z_3}=\frac5{24z_3}.
\end{align*}
If
\begin{align}\label{frac1}
z_1/z_2\le2/3,\qquad z_1/z_3\le6/5,
\end{align}
then~\eqref{cond_Wallach} holds for $p=1$. In this case, according to Theorem~\ref{thm_Wallach}, a Riemannian metric with Ricci curvature $cT$ exists for some $c>0$ provided that
\begin{align*}
7z_2+3z_3<20z_1.
\end{align*}
If 
\begin{align}\label{frac2}
z_1/z_2\ge2/3,\qquad z_2/z_3\le9/5,
\end{align}
then~\eqref{cond_Wallach} holds for $p=2$. In this case, a metric with Ricci curvature $cT$ exists for some $c>0$ as long as
\begin{align*}
21z_1+18z_3<52z_2.
\end{align*}
Finally, if
\begin{align}\label{frac3}
z_1/z_3\ge6/5,\qquad z_2/z_3\ge9/5,
\end{align}
then~\eqref{cond_Wallach} is satisfied for $p=3$. In this situation, a metric with Ricci curvature $cT$ exists for some $c>0$ as long as
\begin{align*}
5z_1+10z_2<36z_3.
\end{align*}
It is easy to see that~\eqref{frac1}, \eqref{frac2} or~\eqref{frac3} necessarily holds for the triple $(z_1,z_2,z_3)$.
\end{example}

\section{Generalised flag manifolds}\label{sec_flag}

Suppose $M$ is a generalised flag manifold, as defined in~\cite{SAIC11}. Let the number $s$ in the decomposition~\eqref{m_decomp} be less than or equal to~5. The results of Sections~\ref{sec_prelim} and~\ref{sec_gen_results} yield an easy-to-verify sufficient condition for the solvability of~\eqref{eq_PRC} on~$M$. This condition involves inequalities depending on~$s$ and on whether $M$ is of type~I, II, A or~B in the terminology of~\cite{SAIC11,AAIC10,AAICYS13}. One could state it in the form of a theorem as we did in Section~\ref{sec_Wallach}; however,
such a theorem would be exceedingly bulky. Instead, we will illustrate the application of the results of Sections~\ref{sec_prelim} and~\ref{sec_gen_results} by considering two examples of~$M$. Only minor changes are required to make our reasoning work for other~$M$.

The definition of a generalised flag manifold implies that the group $G$ is semisimple. Therefore, the Killing form $B$ is non-degenerate. It will be convenient for us set $Q=-B$ throughout Section~\ref{sec_flag}. This means $b_i=1$ in~\eqref{b_def} for all $i=1,\ldots,s$. Because $M$ is a generalised flag manifold, the representations $\Ad(H)|_{\mathfrak m_i}$ and $\Ad(H)|_{\mathfrak m_j}$ are inequivalent when $i\ne j$; see~\cite[page~101]{AA03}. As a consequence, Hypotheses~\ref{hyp_flag} and~\ref{hyp_strong} are satisfied.
To determine the dimensions $d_i$ and the structure constants for specific choices of~$M$, refer to~\cite[Chapter~7]{AB87} and~\cite{AAIC10,SAIC11,AAICYS13}.

We are now ready to illustrate the application of the results of Sections~\ref{sec_prelim} and~\ref{sec_gen_results}.

\begin{example}\label{example_G2U2}
Suppose $M$ is the generalised flag manifold $G_2/U(2)$ in which $U(2)$ corresponds to the long root of~$G_2$. Let the decomposition~\eqref{m_decomp} be as in~\cite{SAIC11}. Then
\begin{align*}
s=3,\qquad d_1=d_3=4,\qquad d_2=2,\qquad [123]=1/2,\qquad [112]=2/3.
\end{align*}
The structure constant $[ijk]$ vanishes if there is no permutation $\rho$ of the multiset $\{i,j,k\}$ such that $[\rho(i)\rho(j)\rho(k)]$ appears above.

As shown in~\cite[Section~4]{MGAP17} (cf.~Lemma~\ref{lem_Wallach_subal} above), the proper Lie subalgebras of $\mathfrak g$ containing $\mathfrak h$ properly are
\begin{align*}
\mathfrak s_1=\mathfrak m_2\oplus\mathfrak h,\qquad \mathfrak s_2=\mathfrak m_3\oplus\mathfrak h.
\end{align*}
Invoking Proposition~\ref{prop_irred} as in the proof of Theorem~\ref{thm_Wallach} leads to the conclusion that $\mathfrak s_1$ and $\mathfrak s_2$ meet the first two requirements of Definition~\ref{def_T-apical}. Because $\Ad(H)|_{\mathfrak m_1}$, $\Ad(H)|_{\mathfrak m_2}$ and $\Ad(H)|_{\mathfrak m_3}$ are pairwise inequivalent, the metric $T$ can be expressed as
\begin{align}\label{T_gen_fl_G2}
-z_1\pi_{\mathfrak m_1}^*B-z_2\pi_{\mathfrak m_2}^*B-z_3\pi_{\mathfrak m_3}^*B
\end{align}
with $z_1,z_2,z_3>0$. Formula~\eqref{sigma_str_irred} implies that 
\begin{align*}
\sigma(\mathfrak s_1,T)=\frac1{4z_2}(2-[112]-2[123])=\frac1{12z_2},\qquad
\sigma(\mathfrak s_2,T)=\frac1{4z_3}(2-[123])=\frac3{8z_3}.
\end{align*}
If $z_2/z_3\le2/9$, then $\mathfrak s_1$ meets requirement~3 of Definition~\ref{def_T-apical}. In this case, $\mathfrak s_1$ is a $T$-apical subalgebra of~$\mathfrak g$. According to Theorem~\ref{thm_gen} and formula~\eqref{cond_str_const}, a Riemannian metric with Ricci curvature $cT$ exists for some $c>0$ provided that
\begin{align}\label{lasttag1}
z_1+z_3<12z_2.
\end{align}
If $z_2/z_3\ge2/9$, then $\mathfrak s_2$ meets requirement~3 of Definition~\ref{def_T-apical}. In this case, $\mathfrak s_2$ is $T$-apical. By Theorem~\ref{thm_gen} and~\eqref{cond_str_const}, a metric with Ricci curvature $cT$ exists for some $c>0$ as long as
\begin{align}\label{lasttag2}
6z_1+3z_2<10z_3.
\end{align}
\end{example}

\begin{remark}\label{rem_compar1}
Let $M$ be the generalised flag manifold $G_2/U(2)$ as in Example~\ref{example_G2U2}. Assume the decomposition~\eqref{m_decomp} is as in~\cite{SAIC11}. If $T$ is given by~\eqref{T_gen_fl_G2}, the main result of~\cite{MGAP17} implies that a metric with Ricci curvature $cT$ exists for some $c>0$ provided that
\begin{align}\label{cond_MGAP17}
\frac{z_2}{z_1+z_3}>\frac1{12},\qquad \frac{z_3}{2z_1+z_2}>\frac3{10};
\end{align}
see~\cite[Example~4.3]{MGAP17}. This condition is more restrictive than the one given by Theorem~\ref{thm_gen}. Indeed, it is obvious that both~\eqref{lasttag1} and~\eqref{lasttag2} follow from~\eqref{cond_MGAP17}. On the other hand, suppose, for instance, that $z_2=2z_3/9$. A metric with Ricci curvature $cT$ exists for some $c>0$ as long as
$z_1<5z_3/3$, according to the arguments in Example~\ref{example_G2U2} above. However, inequalities~\eqref{cond_MGAP17} only hold if $z_1<14z_3/9$.
\end{remark}

\begin{example}\label{example_F4}
Suppose $M$ is the generalised flag manifold $F_4/SU(3)\times SU(2)\times U(1)$. Let the decomposition~\eqref{m_decomp} be as in~\cite{AAIC10}. Then
\begin{align*}
s=4,\qquad &d_1=12,\qquad d_2=18,\qquad d_3=4,\qquad d_4=6, \notag \\ &[224]=2,\qquad [112]=2,\qquad [123]=1,\qquad [134]=2/3.
\end{align*}
The structure constant $[ijk]$ vanishes if there is no permutation $\rho$ of the multiset $\{i,j,k\}$ such that $[\rho(i)\rho(j)\rho(k)]$ appears above.

Arguing as in the proof of Lemma~\ref{lem_Wallach_subal}, we can show that the proper Lie subalgebras of $\mathfrak g$ containing $\mathfrak h$ properly are
\begin{align*}
\mathfrak t_1=\mathfrak m_3\oplus\mathfrak h,\qquad \mathfrak t_2=\mathfrak m_4\oplus\mathfrak h,\qquad \mathfrak t_3=\mathfrak m_2\oplus\mathfrak m_4\oplus\mathfrak h.
\end{align*}
In order to apply Theorem~\ref{thm_gen}, we need to determine which of these subalgebras are $T$-apical. Obviously, they all satisfy requirement~1 of Definition~\ref{def_T-apical}. Moreover, $\mathfrak t_1$ and $\mathfrak t_3$ are maximal in $\mathfrak g$, while $\mathfrak t_2$ is not. Proposition~\ref{prop_irred} implies that $\mathfrak t_1$ and $\mathfrak t_2$ satisfy requirement~2 of Definition~\ref{def_T-apical}. The equality
\begin{align}\label{T_gen_fl_F4}
T=-z_1\pi_{\mathfrak m_1}^*B-z_2\pi_{\mathfrak m_2}^*B-z_3\pi_{\mathfrak m_3}^*B-z_4\pi_{\mathfrak m_4}^*B
\end{align}
holds for some $z_1,z_2,z_3,z_4>0$. 
Exploiting~\eqref{sigma_str_irred}, we find
\begin{align*}
\sigma(\mathfrak t_1,T)=\frac1{4z_3}(2-[123]-[134])=\frac1{12z_3},\\
\sigma(\mathfrak t_2,T)=\frac1{12z_4}(6-2[413]-[422])=\frac2{9z_4}.
\end{align*}
Our next step is to understand whether $\mathfrak t_3$ meets requirement~2 of Definition~\ref{def_T-apical} and to compute $\sigma(\mathfrak t_3,T)$.

Given $h\in\mathcal M(\mathfrak t_3)$, the equality
\begin{align*}
h=-u\pi_{\mathfrak m_2}^*B-v\pi_{\mathfrak m_4}^*B
\end{align*}
holds for some $u,v>0$. By~\eqref{hat_S_formula},
\begin{align*}
\hat S(h)=\frac{18}{2u}+\frac6{2v}-\frac{[211]}{2u}-\frac{[213]}u-\frac{[413]}v-\frac{[224]v}{4u^2}-\frac{[224]}{2v}
=\frac7u+\frac4{3v}-\frac v{2u^2}.
\end{align*}
If $h\in\mathcal M_T(\mathfrak t_3)$, then
\begin{align}\label{lasttag3}
\tr_hT|_{\mathfrak t_3\ominus\mathfrak h}=\frac{18z_2}u+\frac{6z_4}v=1,\qquad u=\frac{18vz_2}{v-6z_4},\qquad v>6z_4.
\end{align}
In this case, $\hat S(h)=\psi_{z_2,z_4}(v)$ with the function $\psi_{z_2,z_4}:(6z_4,\infty)\to\mathbb R$ defined by
\begin{align*}
\psi_{z_2,z_4}(x)=\frac{7(x-6z_4)}{18xz_2}+\frac4{3x}-\frac{(x-6z_4)^2}{648xz_2^2}.
\end{align*}
Consequently,
\begin{align}\label{lasttag4}
\sigma(\mathfrak t_3,T)=\sup\{\hat S(h)\,|\,h\in\mathcal M_T(\mathfrak t_3)\}=\sup\{\psi_{z_2,z_4}(v)\,|\,v>6z_4\}.
\end{align}
A straightforward computation shows that
\begin{align*}
\frac d{dv}\psi_{z_2,z_4}(v)=\frac1{3v^2}\Big(\frac{z_4^2}{6z_2^2}+\frac{7z_4}{z_2}-4\Big)-\frac1{648z_2^2}.
\end{align*}
If $z_2/z_4\ge7/4$, then $\frac d{dv}\psi_{z_2,z_4}(v)<0$ for all $v>6z_4$. In this case, since the function $\psi_{z_2,z_4}$ does not have a global maximum on $(6z_4,\infty)$, the supremum of $\hat S$ over $\mathcal M_T(\mathfrak t_3)$ is not attained. As a consequence, $\mathfrak t_3$~fails to satisfy requirement~2 of Definition~\ref{def_T-apical}. Also,
\begin{align*}
\sigma(\mathfrak t_3,T)=\lim_{v\to6z_4}\psi_{z_2,z_4}(v)=\frac2{9z_4}.
\end{align*}
If $z_2/z_4<7/4$, then $\psi_{z_2,z_4}$ attains its global maximum at the point
\begin{align*}
v_0=6\sqrt{z_4^2+42z_2z_4-24z_2^2}.
\end{align*}
Define $h_0\in\mathcal M(\mathfrak t_3)$ by the formula
\begin{align*}
h_0=-\frac{18v_0z_2}{v_0-6z_4}\pi_{\mathfrak m_2}^*B-v_0\pi_{\mathfrak m_4}^*B.
\end{align*}
Recalling~\eqref{lasttag3} and~\eqref{lasttag4}, we conclude that $h_0$ lies in $\mathcal M_T(\mathfrak t_3)$ and 
\begin{align*}
\hat S(h_0)=\psi_{z_1,z_2}(v_0)=\sigma(\mathfrak t_3,T).
\end{align*}
Thus, $\mathfrak t_3$ satisfies requirement~2 of Definition~\ref{def_T-apical}. Also,
\begin{align*}
\sigma(\mathfrak t_3,T)=\psi_{z_2,z_4}(v_0)=\psi(z_2,z_4)
\end{align*}
with the function $\psi:\{(x,y)\in(0,\infty)^2\,|\,x<7y/4\}\to\mathbb R$ given by
\begin{align*}
\psi(x,y)=
\frac7{18x}-\frac{\sqrt{y^2+42xy-24x^2}-y}{54x^2}.
\end{align*}

We are now ready to apply the results of Section~\ref{sec_gen_results}.
If the inequalities
\begin{align}\label{cond01F4}
z_2/z_4\ge7/4,\qquad z_3/z_4\le3/8,
\end{align}
or the inequalities
\begin{align}\label{cond11F4}
z_2/z_4<7/4,\qquad 1/z_3\ge12\psi(z_2,z_4),
\end{align}
hold, then
$\sigma(\mathfrak t_1,T)\ge\sigma(\mathfrak t_3,T).$
In this case, $\mathfrak t_1$ meets requirement~3 of Definition~\ref{def_T-apical}, which means $\mathfrak t_1$ is $T$-apical. According to Theorem~\ref{thm_gen} and formula~\eqref{cond_str_const}, a Riemannian metric with Ricci curvature $cT$ exists for some $c>0$ provided that
\begin{align}\label{suff_F4_1}
2z_1+3z_2+z_4<30z_3.
\end{align}
If
\begin{align}\label{cond02F4}
z_2/z_4\ge7/4,\qquad z_3/z_4\ge3/8,
\end{align}
then
\begin{align*}
\sigma(\mathfrak t_2,T)=\sigma(\mathfrak t_3,T)\ge\sigma(\mathfrak t_1,T).
\end{align*}
In this case, $\mathfrak t_2$ is $T$-apical. By Theorem~\ref{thm_gen} and formula~\eqref{cond_str_const}, a metric with Ricci curvature $cT$ exists for some $c>0$ provided that
\begin{align}\label{suff_F4_2}
12z_1+18z_2+4z_3<63z_4.
\end{align}
Finally, if
\begin{align}\label{cond22F4}
z_2/z_4<7/4,\qquad 1/z_3\le12\psi(z_2,z_4),
\end{align}
then $\sigma(\mathfrak t_3,T)\ge\sigma(\mathfrak t_1,T)$. In this situation, $\mathfrak t_3$ is $T$-apical. A metric with Ricci curvature $cT$ exists for some $c>0$ as long as
\begin{align}\label{suff_F4_3}
(3z_1+z_3)\psi(z_2,z_4)<2.
\end{align}
\end{example}

\begin{remark}
In Example~\ref{example_F4}, if $z_2/z_4\ge7/4$, one may use Lemma~\ref{lem_key_apic_ex} rather than a direct computation to find $\sigma(\mathfrak t_3,T)$. Indeed, the generalised flag manifold $F_4/SU(3)\times SU(2)\times U(1)$ satisfies Hypothesis~\ref{hyp_strong}. According to Lemma~\ref{lem_key_apic_ex}, there exists a Lie subalgebra $\mathfrak k$ of $\mathfrak t_3$ such that formulas~\eqref{apic_ex_flas} hold with some $h\in\mathcal M_T(\mathfrak k)$ and $\mathfrak s=\mathfrak t_3$. If $z_2/z_4\ge7/4$, then the supremum of $\hat S$ over $\mathcal M_T(\mathfrak t_3)$ is not attained and $\mathfrak k$ cannot equal~$\mathfrak t_3$. In this case, $\mathfrak k=\mathfrak t_2$ and $\sigma(\mathfrak t_2,T)\ge\sigma(\mathfrak t_3,T)$. On the other hand, Remark~\ref{rem_sigma_subalg} implies $\sigma(\mathfrak t_2,T)\le\sigma(\mathfrak t_3,T)$. Thus,
\begin{align*}
\sigma(\mathfrak t_2,T)=\sigma(\mathfrak t_3,T)=\frac2{9z_4}.
\end{align*}
\end{remark}

\begin{remark}\label{rem_compar2}
Let $M$ be the generalised flag manifold $F_4/SU(3)\times SU(2)\times U(1)$ as in Example~\ref{example_F4}. Assume the decomposition~\eqref{m_decomp} is as in~\cite{AAIC10}. If $T$ is given by~\eqref{T_gen_fl_F4}, the main result of~\cite{MGAP17} implies that a metric with Ricci curvature $cT$ exists for some $c>0$ provided that
\begin{align}\label{cond_F4_MGAP}
\frac{z_4}{z_2}>\frac47,\qquad \frac{z_3}{2z_1+3z_2+z_4}>\frac1{30}, \qquad 
\frac{\min\{z_2,z_4\}}{3z_1+z_3}>\frac{47}{72}.
\end{align}
This condition is more restrictive than the one given by Theorem~\ref{thm_gen}. Indeed, suppose $z_2/z_4\ge7/4$. According to Theorem~\ref{thm_gen}, a metric with Ricci curvature $cT$ exists for some $c>0$ as long as~\eqref{cond01F4} and~\eqref{suff_F4_1} or~\eqref{cond02F4} and~\eqref{suff_F4_2} are satisfied, as explained in Example~\ref{example_F4}. However, inequalities~\eqref{cond_F4_MGAP} fail to hold. Next, suppose $z_2/z_4<7/4$. By Theorem~\ref{thm_gen}, a metric with Ricci curvature $cT$ exists for some $c>0$ as long as~\eqref{cond11F4} and~\eqref{suff_F4_1} or~\eqref{cond22F4} and~\eqref{suff_F4_3} are satisfied, as shown in Example~\ref{example_F4}. One can derive both~\eqref{suff_F4_1} and~\eqref{suff_F4_3} from~\eqref{cond_F4_MGAP}.
At the same time,~\eqref{cond22F4} and~\eqref{suff_F4_3} hold if, for instance,
\begin{align*}
z_2=z_3=z_4,\qquad z_1\in\bigg[\frac{25z_2}{141},\frac{(637+36\sqrt{19})z_2}{465}\bigg).
\end{align*}
However, such $z_1$, $z_2$, $z_3$ and $z_4$ do not satisfy~\eqref{cond_F4_MGAP}.
\end{remark}

\section*{Acknowledgement}

I am grateful to Andreas Arvanitoyeorgos and Marina Statha for suggesting that I consider the prescribed Ricci curvature problem on generalised Wallach spaces.


\begin{thebibliography}{99}

\bibitem{NAYN16}
N.A. Abiev, Y.G. Nikonorov, The evolution of positively curved invariant Riemannian metrics on the Wallach spaces under the Ricci flow, Ann. Global Anal. Geom. 50 (2016)~65--84.

\bibitem{SAIC11}
S. Anastassiou, I. Chrysikos, The Ricci flow approach to homogeneous Einstein metrics on flag manifolds, J.~Geom. Phys.~61 (2011) 1587--1600.

\bibitem{AA03}
A. Arvanitoyeorgos, An introduction to Lie groups and the geometry of homogeneous spaces, American Mathematical Society, Providence,~RI, 2003.

\bibitem{AA15}
A. Arvanitoyeorgos, Progress on homogeneous Einstein manifolds and some open problems, Bull. Greek Math. Soc.~58 (2015) 75--97.

\bibitem{AAIC10}
A. Arvanitoyeorgos, I. Chrysikos, Invariant Einstein metrics on flag manifolds with four isotropy summands, Ann. Glob. Anal. Geom.~37 (2010) 185--219.

\bibitem{AAICYS13}
A. Arvanitoyeorgos, I. Chrysikos, Y. Sakane, Homogeneneous Einstein metrics on generalized flag manifolds with five isotropy summands, Intern. J. Math.~24 (2013) 1350077 (52 pages).

\bibitem{AANS15}
A. Arvanitoyeorgos, N.P. Souris, Geodesics in generalized Wallach spaces, J.~Geom.~106 (2015) 583--603. 

\bibitem{AAYW17}
A. Arvanitoyeorgos, Y. Wang, Homogeneous geodesics in generalized Wallach spaces, Bull. Belg. Math. Soc. Simon Stevin~24 (2017) 257--270. 

\bibitem{AB87}
A. Besse, Einstein manifolds, Springer-Verlag, Berlin, 1987.

\bibitem{CB04}
C. B\"ohm, Homogeneous Einstein metrics and simplicial complexes, J.~Diff. Geom.~67 (2004) 79--165.

\bibitem{TB16}
T. Buttsworth, The prescribed Ricci curvature problem on three-dimensional unimodular Lie groups, submitted, arXiv:1607.03233 [math.DG].

\bibitem{TBAPYRWZ}
T. Buttsworth, A. Pulemotov, Y.A. Rubinstein, W. Ziller, Ricci iteration on homogeneous spheres [provisional title], in preparation.

\bibitem{ZCYKKL16}
Z. Chen, Y. Kang, K. Liang, Invariant Einstein metrics on three-locally-symmetric spaces,
Comm. Anal. Geom. 24 (2016) 769--792.

\bibitem{ED17b}
E. Delay, Inversion of some curvature operators near a parallel Ricci metric II: Non-compact manifold with bounded geometry, to appear in Ark. Mat., arXiv:1701.06390 [math.DG].

\bibitem{EDMH01}
E. Delay, M. Herzlich, Ricci curvature in the neighborhood of rank-one symmetric spaces, J. Geom. Anal.~11 (2001) 573--588. 

\bibitem{MGAP17}
M.D. Gould, A. Pulemotov, The prescribed Ricci curvature problem on homogeneous spaces with intermediate subgroups, submitted, arXiv:1710.03024 [math.DG].

\bibitem{RH84}
R.S. Hamilton, The Ricci curvature equation, in: Seminar on nonlinear partial differential equations (S.-S.~Chern, ed.), Math. Sci. Res. Inst. Publ.~2, Springer-Verlag, New York, 1984, 47--72.

\bibitem{YN16}
Y.G. Nikonorov, Classification of generalized Wallach spaces, Geom. Dedicata~181 (2016) 193--212.

\bibitem{YNERVS07}
Y.G. Nikonorov, E.D. Rodionov, V.V. Slavskii, Geometry of homogeneous Riemannian manifolds, J.~Math. Sci. (N.Y.)~146 (2007) 6313--6390.

\bibitem{RPLAMP15}
R. Pina, L. Adriano, M. Pieterzack, Prescribed diagonal Ricci tensor in locally conformally flat manifolds, 
J. Math. Anal. Appl.~421 (2015) 893--904. 

\bibitem{APadd}
A. Pulemotov, The Dirichlet problem for the prescribed Ricci curvature equation on cohomogeneity one manifolds,  
Ann. Mat. Pura Appl.~195 (2016) 1269--1286.

\bibitem{AP16}
A. Pulemotov, Metrics with prescribed Ricci curvature on homogeneous spaces, J.~Geom. Phys.~106 (2016) 275--283.

\bibitem{APYRsubm}
A. Pulemotov, Y.A. Rubinstein, Ricci iteration on homogeneous spaces, to appear in Trans. AMS, arXiv:1606.05064 [math.DG].

\bibitem{MWWZ86}
M.Y. Wang, W. Ziller, Existence and nonexistence of homogeneous Einstein metrics, Invent. Math.~84 (1986) 177--194.

\bibitem{WZ82}
W. Ziller, Homogeneous Einstein metrics on spheres and projective spaces, Math. Ann.~259 (1982) 351--358.
\end{thebibliography}
\end{document}